\newtheorem{theorem}{Theorem}[section]
\newtheorem{assumption}[theorem]{Assumption}
\newtheorem{definition}[theorem]{Definition}
\newtheorem{proposition}[theorem]{Proposition}
\newtheorem{corollary}[theorem]{Corollary}
\newtheorem{lemma}[theorem]{Lemma}
\newtheorem{remark}[theorem]{Remark}
\newtheorem{example}[theorem]{Example}
\newtheorem{examples}[theorem]{Examples}
\newtheorem{foo}[theorem]{Remarks}
\title{A VERSION OF H\"ORMANDER'S THEOREM FOR MARKOVIAN ROUGH PATHS }
\author{Guang Yang}
\date{}
\begin{document}
	\maketitle
	
\begin{abstract}
	We consider a rough differential equation of the form \(dY_t=\sum_i V_i(Y_t)d\boldsymbol{X}^i_t+V_0(Y_t)dt \), where \(\boldsymbol{X}_t \) is a Markovian rough path. We demonstrate that if the vector fields \((V_i)_{0\leq i\leq d} \) satisfy the parabolic H\"ormander's condition, then \(Y_t\) admits a smooth density with a Gaussian type upper bound, given that the generator of \(X_t\) satisfy certain non-degenerate conditions.  The main new ingredient of this paper is the study of a non-degenerate property of the Jacobian process of \(X_t\). 
\end{abstract}
\tableofcontents

\section{Introduction}
We consider rough differential equations in $\mathbb{R}^d$ of the form
 \begin{equation}
 	\label{The general RDE}
 	dY_t=\sum_{i=1}^{d} V_i(Y_t)d\boldsymbol{X}^i_t+V_0(Y_t)dt,\; Y_0=y_0\in\mathbb{R}^d, \; t\in [0,1].
 \end{equation} 
  Over the past decade, rough differential equations driven by Gaussian processes (i.e., $\boldsymbol{X}_t$ is a Gaussian rough path) have been extensively studied. The existence and smoothness of the density of $Y_t$ is among the most important questions and have attracted a lot attentions.

  The case where $\boldsymbol{X}_t$ is given by the Stratonavich Brownian rough path is equivalent to the classical probabilistic H\"ormander's theorem studied by P. Malliavin \cite{MR536013}. In this case, it is possible to prove that $Y_t$ admits a smooth density is equivalent to the hypoellipticity of the differential operator given by
  \[L=\frac{1}{2}\sum_{i=1}^{d}V_i^2+V_0. \]
   L. H\"ormander \cite{MR222474} was the first to formulate a sufficient condition on $\{V_i \}_{0\leq i\leq d}$ to ensure the hypoellipticity of $L$, known today as the parabolic H\"ormander's condition. The parabolic H\"ormander's condition,  which we introduce now, has become a fundamental setting in many areas including probability, geometry and PDE.
 \begin{definition}
 Let $\{ V_i \}_{0\leq i\leq d}$ be a collection of smooth vector fields on $\mathbb{R}^d$. Define
 \begin{align*}
 	\mathcal{W}^0&=\big\{V_1,\cdots , V_d  \big \},\\
 	\mathcal{W}^{k+1}&=  \cup_{0\leq i\leq d}\big\{  [V,V_i]\;,\;       V\in\mathcal{W}^k \big \}\;,\; k\in \mathbb{Z}^+ .
 \end{align*}
We say $\{ V_i \}_{0\leq i\leq d}$ satisfy the parabolic H\"ormander's condition if we can find an integer \(k_0\geq 0 \) such that for every \(x\in\mathbb{R}^d \)
 \begin{equation*}
 	Span \left\{\cup_{0\leq k\leq k_0} \mathcal{W}^{k}(x) \right\} =\mathbb{R}^d.
 \end{equation*}
 \end{definition}

With the previous definition, the probabilistic H\"ormander's theorem by P. Malliavin \cite{MR536013} can be stated as follows.
\begin{theorem}
	\label{ Probabilistic Hormander's theorem}
	Let $W_t$ be a standard Brownian motion on $\mathbb{R}^d$ and consider
	\[ dY_t=\sum_{i=1}^{d} V_i(Y_t)\circ dW^i_t+V_0(Y_t)dt,\; Y_0=y_0\in\mathbb{R}^d, \; t\in [0,1]. \]
	Assume that $\{ V_i \}_{0\leq i\leq d}$ are smooth vector fields with bounded derivatives of all orders. If $\{ V_i \}_{0\leq i\leq d}$ satisfy the parabolic H\"ormander's condition, then for all $t\in (0,1]$,  $Y_t$ admits a smooth density with respect to the Lebesgue measure on $\mathbb{R}^d$. 
\end{theorem}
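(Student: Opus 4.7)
The plan is to invoke Malliavin's probabilistic criterion: $Y_t$ admits a smooth density provided $Y_t\in\mathbb{D}^\infty$ and the Malliavin covariance matrix $C_t$ is a.s.\ invertible with $\det(C_t)^{-1}\in L^p$ for every $p<\infty$. Malliavin--Sobolev regularity of $Y_t$ follows from the boundedness of all derivatives of the $V_i$ by standard Picard-iteration estimates, so the heart of the matter is to show that the smallest eigenvalue of $C_t$ has finite negative moments of all orders.

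First I would factor $C_t=J_t\bar C_t J_t^{\top}$ with the reduced Malliavin matrix
\[
\bar C_t \;=\; \sum_{i=1}^d\int_0^t \bigl(J_s^{-1}V_i(Y_s)\bigr)\bigl(J_s^{-1}V_i(Y_s)\bigr)^{\top}\,ds,
\]
where $J_s$ is the Jacobian of the stochastic flow. Since $J_s^{\pm 1}$ satisfies a linear Stratonovich SDE with smooth coefficients having bounded derivatives, both have moments of all orders, and it suffices to bound $\lambda_{\min}(\bar C_t)^{-1}$ in every $L^p$. For any smooth vector field $W$ and any unit $v\in\mathbb{R}^d$, the scalar process $Z^W_s:=\langle v,\,J_s^{-1}W(Y_s)\rangle$ is a continuous semimartingale whose Stratonovich differential, obtained directly from the SDEs satisfied by $J_s^{-1}$ and $Y_s$ and the definition of the Lie bracket, is
\[
dZ^W_s \;=\; \sum_{i=1}^d Z^{[V_i,W]}_s\circ dW^i_s \;+\; Z^{[V_0,W]}_s\,ds.
\]

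I would then run an iterative scheme based on Norris's lemma, a quantitative Doob--Meyer decomposition stating that if $\int_0^t (Z^W_s)^2\,ds$ is polynomially small in some parameter $\varepsilon$, then with overwhelming probability so are $\int_0^t(Z^{[V_i,W]}_s)^2\,ds$ for every $0\leq i\leq d$. The assumption $\langle v,\bar C_t v\rangle\leq\varepsilon$ is precisely such smallness with $W$ ranging over $\mathcal{W}^0=\{V_1,\ldots,V_d\}$; iterating at most $k_0$ times propagates smallness through the families $\mathcal{W}^k$. The parabolic H\"ormander condition then forces $\langle v,U(y_0)\rangle$ to be small for a family $\{U\}\subset\bigcup_{k\leq k_0}\mathcal{W}^k$ that spans $\mathbb{R}^d$ at $y_0$, contradicting $|v|=1$ once $\varepsilon$ is small enough. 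A covering argument over the unit sphere converts this into $\mathbb{P}(\lambda_{\min}(\bar C_t)<\varepsilon)=O(\varepsilon^p)$ for every $p$, which gives the required negative-moment bound.

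The main technical obstacle is the book-keeping of losses through the iteration: each application of Norris's lemma costs a polynomial factor in $\varepsilon$ together with an exceptional set on which the bound fails, so one has to choose the smallness exponents at each level carefully in order that after $k_0$ iterations the accumulated loss still beats $\varepsilon^p$ for arbitrary prescribed $p$. This in turn requires uniform H\"older and moment bounds for $Y$, $J^{\pm 1}$ and the iterated bracket fields evaluated along the trajectory, which serve as the deterministic input to Norris's lemma. In the Brownian setting these inputs are by now classical, so once the iteration is set up correctly the conclusion follows.
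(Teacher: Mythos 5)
This theorem is stated in the paper as a classical result attributed to Malliavin \cite{MR536013} and is not proved there, so there is no internal proof to compare against; your sketch is the standard Malliavin-calculus argument (membership in $\mathbb{D}^\infty$, factorization $C_t=J_t\bar C_tJ_t^{\top}$ through the reduced Malliavin matrix, iterated Norris lemma propagating smallness through the bracket hierarchy, the H\"ormander spanning condition, and a covering argument over the unit sphere), which is correct in outline and is also exactly the architecture the paper adapts for its own main theorem \ref{main1}. The one place needing care in a full write-up is the It\^o--Stratonovich correction in the drift of $Z^W$: a direct application of Norris's lemma controls the martingale integrands $Z^{[V_i,W]}$, $i\geq 1$, and the combined drift $Z^{[V_0,W]}+\tfrac12\sum_{i}Z^{[V_i,[V_i,W]]}$, so a second application to each $Z^{[V_i,W]}$ is required before $Z^{[V_0,W]}$ can be isolated --- this is part of the book-keeping you already flag as the main technical obstacle.
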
  
  \begin{remark}
  	Theorem \ref{ Probabilistic Hormander's theorem} is still true when the Stratonovich integral is replaced by It\^o's integral in the equation. 
  \end{remark} 	
Under the same assumptions as theorem \ref{ Probabilistic Hormander's theorem} with $W_t$ replaced by a fractional Brownian motion (fBm) with Hurst parameter $H>1/2$, F. Baudoin and M. Hairer \cite{MR2322701} proved that $Y_t$ admits a smooth density. With recent developments on Gaussian rough paths, we can consider \eqref{The general RDE} with $\boldsymbol{X}_t$ given by a general non-degenerate Gaussian rough path. It was proved in \cite{MR2680405}, that the $Y_t$ admits a density if $\{ V_i \}_{0\leq i\leq d}$ satisfy the parabolic H\"ormander's condition. Smoothness of the density was proved later in \cite{MR3298472} after the tail estimate for the associated Jacobian process and a deterministic Norris's lemma were established (see \cite{MR3112937},\cite{MR3112925}).

 
 The major tool that all above-mentioned results heavily rely on is the Malliavin calculus, which is a successful application of differential measure theory to Radon Gaussian measures. Despite being sufficiently flexible to obtain related results for a number of extensions of the original problem, Malliavin calculus cannot be applied to problems without an underlying Gaussian structure. As a result, contrary to the rapid developments of its Gaussian counterpart, the study of rough differential equations driven by Markovian rough paths progresses rather slowly. In \cite{MR3814250}, I. Chevyrev and M. Ogrodnik used analysis on manifolds to prove that $Y_t$ admits a density with respect to any smooth measure with assumptions strictly stronger than parabolic H\"ormander's condition on the vector fields. The question of the smoothness of the density is still open.

 In this paper, we consider \eqref{The general RDE} with $\boldsymbol{X}_t$ given by a Markovian rough path. Our goal is to prove that, with the parabolic H\"ormander's condition assumption on the vector fields, $Y_t$ admits a smooth density with a Gaussian type upper bound. Unlike Gaussian processes, a general Markov process $X_t$ may not have a smooth density. Thus, certain regularity assumption is necessary for the coefficients of its generator. We will see very soon that this extra regularity assumption provides an underlying Gaussian structure, which, in turn, makes Malliavin calculus applicable. 
 
  Now we can introduce our basic settings. We fix two constants \(0< \lambda<\Lambda \). Let \(a(x) \) be a non-constant measurable function from \(\mathbb{R}^d \) to the space of symmetric matrices which are uniformly elliptic with respect to \(\lambda \) and bounded by \(\Lambda \), i.e.,
 \begin{equation}
 	\label{Uniform ellipticity and boundedness}
 \lambda \abs{\xi}^2 \leq \langle \xi, a(x)\xi \rangle\leq \Lambda \abs{\xi}^2
 \end{equation}
 for any \(\xi\in \mathbb{R}^d \) and almost every \(x\in\mathbb{R}^d \), where \(\abs{\xi} \) is the Euclidean norm. We use \(\Xi^{\lambda,\Lambda} \) to denote all the functions that satisfy \eqref{Uniform ellipticity and boundedness}. Define the associated differential operator
 \begin{equation}
 \label{generator}
 L=\frac{1}{2}\sum_{i,j=1}^{d} \frac{\partial}{\partial x_i}(a_{i,j}\frac{\partial}{\partial x_j} )
 \end{equation} 
 with domain \(Dom(L)=\{f\in H^2(\mathbb{R}^d) \mid Lf\in L^2 \} \). Let $\{X_t\}_{t\geq 0}$ be the Markov process generated by $L$ and define $Y_t$ as the solution to 
 \begin{equation*}
 	dY_t=\sum_{i=1}^{d} V_i(Y_t)d\boldsymbol{X}^i_t+V_0(Y_t)dt,\; Y_0=y_0\in\mathbb{R}^d, \; t\in [0,1].
 \end{equation*}
We can now state our assumptions. First two are standard.
\begin{assumption}
	\label{Assumption 1}
	The vector fields $\{V_i \}_{0\leq i \leq d}$ are smooth and bounded together with all their derivatives.
\end{assumption}
\begin{assumption}
	\label{Assumption 2}
	The vector fields $\{V_i \}_{0\leq i \leq d}$ satisfy the parabolic H\"ormander's condition.
\end{assumption}	
Our next assumption is about the regularity of $a(x)$ that we mentioned earlier.
	\begin{assumption}
		\label{Assumption 4}
		The function \(a(x) \) is smooth and bounded together with all their derivatives.
	\end{assumption}
	The smoothness assumption on $a(x)$, though seems a bit restrictive at the first look, is actually necessary. Indeed, we can let $\{V_i \}_{1\leq i\leq d}$ be the columns of the identity matrix on $\mathbb{R}^d$ and let $V_0=0$. Obviously $\{V_i\}_{0\leq i\leq d}$ satisfy \textbf{\emph{Assumptions} \ref{Assumption 1}}  and \textbf{\ref{Assumption 2}}. If we aim for a H\"ormander's type theorem we must have that $Y_t=X_t$ has a smooth density. On the other hand, we know from classical analysis that, to obtain \(k\)th-order derivative on the density of \(X_t \), one needs \(a(x) \) to be at least \((k-1) \)th-order continuously differentiable. Consequently, the smoothness of $a(x)$ is a necessary. 
	
	It is well-known that if $a(x)$ is smooth, $X_t$ is a diffusion process, whose stochastic differential equation can be written as follows:
	\begin{equation*}
		dX_t=\sum_{i=1}^{d}A_i(X_t)dW^i_t+B(X_t)dt,
	\end{equation*} 
where $A=\sqrt{a}$ and $B=\nabla \cdot a$.
 
 Central to all previous cases is the non-degeneracy of the driving signal $X_t$. It is not surprising that $A(x)$, which itself is an elliptic system, guarantees that $X_t$ is non-degenerate in certain sense. However, we will see later that, exclusive to the non-Gaussian case, the non-degeneracy of the Malliavin derivative of $X_t$ is also needed (see remark \ref{Why that assumption}). This motivates our final assumption. We want to emphasis that the next assumption is not necessary if $X_t$ generated by \eqref{generator} is a Gaussian process (e.g., when $a(x)$ is constant). Hence, it constitutes the biggest difference between non-Gaussian case and Gaussian case. 
 \begin{assumption}
	\label{Assumption 3}
	Let \(A(x)\) be the unique square root of \(a(x) \). For a fixed positive constant \(C_J\), the differential of \(A(x) \), which is a smooth map \(dA(x):\mathbb{R}^d\rightarrow \mathbb{R}^{d\times d} \), does not vanish and satisfy the following uniform non-degenerate condition
	\begin{equation*}
		\abs{v^T \cdot dA(x) }^2\geq C_J\abs{v}^2,\;\forall x,v\in \mathbb{R}^d.
	\end{equation*}
\end{assumption}


 
The main result of this paper is the following:
 \begin{theorem}
 	\label{main1}
 	Assume \( a\in\Xi^{\lambda,\Lambda} \), let \(X_t\) be the Markov process whose generator is given by \eqref{generator} with canonical rough lift \(\boldsymbol{X_t} \). Consider the rough differential equation
 	\begin{equation*}
 	Y_t=y_0+\sum_{i=1}^{d}\int_{0}^{t}V_i(Y_s)d\boldsymbol{X}^i_s+\int_{0}^{t} V_0(Y_s)ds,\;\;y_0\in\mathbb{R}^d,\;t\in[0,1].
 	\end{equation*}
 	Suppose that $a(x)$ is \textbf{not constant} and \textbf{Assumptions \ref{Assumption 1}, \ref{Assumption 2}, \ref{Assumption 4}}  and \textbf{\ref{Assumption 3}} are satisfied. Then for any \(t\in(0,1] \) , \(Y_t\) has a smooth density \(p_{Y_t}(y) \) with respect to the Lebesgue measure on \(\mathbb{R}^d \). Moreover, \(p_{Y_t}(y) \) has the following Gaussian type upper bound,
 	\begin{equation*}
 	p_{Y_t}(y)\leq C_1(t)\exp(-\frac{C_2 (y-y_0)^2}{t}).
 	\end{equation*}
 \end{theorem}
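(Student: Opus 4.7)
The strategy is to set up Malliavin calculus on the Wiener space underlying $X_t$. Under Assumption \ref{Assumption 4}, $a(x)$ is smooth and so $X_t$ is a diffusion $dX_t = A(X_t)dW_t + B(X_t)dt$ with $A = \sqrt{a}$ and $B = \nabla\cdot a$; the driving Brownian motion $W$ provides the Gaussian structure on which everything is built. The first step is to show that $Y_t \in \mathbb{D}^\infty$. This combines (i) Gaussian-tail estimates on the $p$-variation rough path norm of $\boldsymbol{X}$, which come from the Aronson heat kernel bound for $L$ together with the known moment estimates for Markovian rough paths, and (ii) the Malliavin smoothness of the composition of the It\^o-Lyons solution map with the smooth-coefficient It\^o map $W \mapsto X$. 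Differentiation yields the chain-rule formula
\[ D_s Y_t = \int_s^t \mathcal{J}_{t\leftarrow u} V(Y_u)\, d(D_s X)_u, \]
where $\mathcal{J}_{t\leftarrow u}$ is the Jacobian of the flow of $Y$.

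Smoothness of the density of $Y_t$ then reduces to proving $\mathbb{E}[\det \Gamma_{Y_t}^{-p}] < \infty$ for every $p \geq 1$. Writing $\Gamma_{Y_t} = \mathcal{J}_t \mathcal{C}_t \mathcal{J}_t^{T}$, with $\mathcal{C}_t$ a reduced matrix assembled from pairings of $\mathcal{J}_u^{-1}V(Y_u)$ against $DX$, the H\"ormander program splits into two independent tasks: a \emph{deterministic} Norris-type lemma for Markovian rough paths, propagating the bracket condition of Assumption \ref{Assumption 2} along the path and delivering a lower bound for $\mathcal{C}_t$ in terms of the variability of $\boldsymbol{X}$ (parallel to the Cass-Hairer-Litterer-Tindel argument \cite{MR3298472}); and a \emph{probabilistic} non-degeneracy statement for the Malliavin derivative $D_s X$ itself, which has no counterpart in the Gaussian setting.

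The first task should follow by combining the small-ball techniques of \cite{MR3298472} with the Aronson-type lower bounds available for $X_t$ generated by a divergence form operator. The second task is the main novelty advertised in the abstract and the step I expect to be hardest: unlike in the Gaussian case where $DX$ is deterministic, here $D_s X_t$ itself solves a linearised SDE driven by $W$ and can in principle degenerate. Assumption \ref{Assumption 3} is tailored precisely to rule this out, since the uniform lower bound on $|v^T\cdot dA(x)|$ forces the Jacobian $\partial_x X_t$ of the stochastic flow to be Malliavin non-degenerate with all negative moments finite. I would prove this by analysing the linearised SDE and applying a Norris-style small-ball argument in which $dA(X_s)\partial_x X_s$ plays the role of a coefficient bounded below thanks to Assumption \ref{Assumption 3}. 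Combining the two ingredients yields the required negative moments of $\det \Gamma_{Y_t}$.

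Finally, the Gaussian upper bound is a Kusuoka-Stroock style consequence of what has been assembled: once $Y_t$ is Malliavin smooth with controlled negative moments of $\det \Gamma_{Y_t}$, and once $Y_t - y_0$ has Gaussian tails (which follow from the Aronson Gaussian tails for $X_t - x_0$ together with boundedness of the vector fields and continuity of the It\^o-Lyons map), the integration-by-parts representation of the density gives
\[ p_{Y_t}(y) \leq C_1(t)\exp\!\Bigl(-\frac{C_2(y-y_0)^2}{t}\Bigr). \]
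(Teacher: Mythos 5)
Your proposal follows essentially the same route as the paper: Malliavin calculus on the Wiener space of the driving Brownian motion $W$ (using $A=\sqrt{a}$, $B=\nabla\cdot a$), the integral formula $D_sY_t=\int_s^t J^Y_{t\leftarrow u}V(Y_u)\,d(D_sX)_u$, a deterministic Norris lemma combined with a small-ball/H\"older-roughness estimate for $X$, the genuinely new non-degeneracy of $D_sX$ (equivalently of the Jacobian $J^X$) extracted from \textbf{Assumption \ref{Assumption 3}} via a Norris-style argument on the linearised SDE, and finally negative moments of $\det\Gamma(Y_t)$ plus an integration-by-parts density bound with Gaussian tails. The one place the paper is simpler and where your sketch is shaky in detail: both for $Y_t\in\mathbb{D}^\infty$ and for the tail of $\sup_{s\le t}|Y_s-y_0|$ it uses that $(X_t,Y_t)$ solves an ordinary Brownian SDE with $C^\infty_b$ coefficients (rough integrals against $\boldsymbol{X}$ coincide with Stratonovich ones), whereas your route through Gaussian tails of the rough-path norm and ``continuity of the It\^o--Lyons map'' would, as stated, only give exponential rather than Gaussian tails for $Y_t-y_0$, and your phrase about $\partial_x X_t$ being ``Malliavin non-degenerate'' misnames what is actually needed and proved, namely the Norris-type implication that smallness of $\int_0^\cdot f(s)^T dJ^X_{s\leftarrow 0}$ forces smallness of $f$.
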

	The fundamental argument for our proof is again the classical Malliavin calculus adapted to the rough paths theory, which involves the study of the Malliavin derivative of $Y_t$ and a small ball estimate for $X_t$. The main difficulty in the our case is the extra integral structure in the Malliavin derivative of $Y_t$, which never appeared in any previous cases as explained in remark \ref{Difference}. We will tackle this by developing a non-degenerate property of the Jacobian process of $X_t$.
	
	There is another important point that we would like to emphasis. We shall see later that the majority of our proofs can be completely carried out in the language of classical stochastic calculus. This is due to the fact that $X_t$ and its Malliavin derivative are both diffusion and rough integrals against diffusion coincide with the corresponding Stratonovich integrals. We will frequently take advantage of this fact in our proofs. From this perspective, our results can be viewed as an application of stochastic calculus to a rough path problem. However, we will continue to present our results in the rough paths setting. The reason is twofold. On one hand, formula \eqref{Malliavin derivative}, which is crucial for future studies in more general settings, can only be generalized as rough integrals. On the other hand, results like proposition \ref{Norris} are, in fact, deterministic and point-wise, which are stronger than their stochastic counterparts. We prefer to keep them in this stronger form.  


\section{Preliminary material}

\subsection{Rough paths}
For $\alpha\in (\frac{1}{3},\frac{1}{2}]$, we define the space of $\alpha$-H\"older rough path on $\mathbb{R}^d$, in symbols $\mathscr{C}^\alpha([0,1], \mathbb{R}^d)$, as those pairs $(X, \mathbb{X})=: \boldsymbol{X}\in C([0,1], \mathbb{R}^d\oplus (\mathbb{R}^d)^{\otimes 2})$ such that
\begin{equation*}
	\norm{X}_{\alpha}=\sup_{s\neq t\in[0,1]}\frac{\abs{X_{s,t}}}{\abs{t-s}^\alpha}<+\infty,\; \; \norm{\mathbb{X}}_{2\alpha}=\sup_{s \neq t\in [0,1]}\frac{\abs{\mathbb{X}_{s,t}}}{\abs{t-s}^{2\alpha}}<+\infty,
\end{equation*}
where $X_{s,t}=X_t-X_s$ and similarly for $\mathbb{X}_{s,t}$. Moreover, for $0\leq s\leq u \leq t \leq 1$, 
\begin{equation*}
	\mathbb{X}_{s,t}=\mathbb{X}_{s,u}+\mathbb{X}_{u,t}+X_{s,u}\otimes X_{u,t}.
\end{equation*}
We equip $\mathscr{C}^\alpha([0,1], \mathbb{R}^d)$ with the rough metric
\begin{equation*}
	\rho_{\alpha}(\boldsymbol{X}, \boldsymbol{Y})=\sup_{s\neq t\in[0,1]}\frac{\abs{X_{s,t}-Y_{s,t}}}{\abs{t-s}^\alpha}+\sup_{s \neq t\in [0,1]}\frac{\abs{\mathbb{X}_{s,t}-\mathbb{Y}_{s,t}}}{\abs{t-s}^{2\alpha}},
\end{equation*}
and define $\rho_{\alpha}(\boldsymbol{X}):=\rho_{\alpha}(\boldsymbol{X},0 )$. Note that if $X\in BV([0,1];\mathbb{R}^d)$, $\mathbb{X}$ can be canonically defined as
\begin{equation*}
	\mathbb{X}_{s,t}=\int_{s}^{t}X_{s,r}dX_r,
\end{equation*}
where the integral is understood as Riemann–Stieltjes integral and $(X,\mathbb{X}) \in\mathscr{C}^\alpha([0,1], \mathbb{R}^d)$ for any $\alpha\in (0,1]$.  A rough path $\boldsymbol{X}\in \mathscr{C}^\alpha([0,1], \mathbb{R}^d)$ is said to be a geometric $\alpha$-H\"older rough path if we can find a sequence $\{X^k\}_{k\geq 1}\in BV([0,1];\mathbb{R}^d)$ such that
\begin{equation}
	\label{Geometric rough path}
	\lim_{k\rightarrow \infty}\rho_\alpha(\boldsymbol{X},\boldsymbol{X}^k)\rightarrow 0.
\end{equation}

Another important notion is the so called controlled rough path. Let $\alpha\in (\frac{1}{3}, \frac{1}{2}]$ and $X_t\in C^\alpha([0,1], \mathbb{R}^d)$ be an $\alpha$-H\"older continuous path. For a Banach space $W$, we say $Y_t\in  C^\alpha([0,1], W)$, is controlled by $X_t$ if we can find $Y'_t\in C^\alpha([0,1], L(\mathbb{R}^{d}, W)) $ such that the remainder term $R^Y$ given by
\begin{equation*}
	Y_{s,t}=Y'_sX_{s,t}+R^Y_{s,t},
\end{equation*}
satisfies $\norm{R^Y}_{2\alpha}<+\infty$, where $L(\mathbb{R}^{d}, W)$ is the space of all linear operators from $\mathbb{R}^{d}$ to $W$. We denote the space of all $W$-valued controlled paths $\mathscr{D}^{2\alpha}_X(W)$, and endow it with the semi-norm
\[\norm{Y ,Y'}_{X, 2\alpha}=\norm{Y'}_{\alpha}+\norm{R^Y}_{2\alpha}.   \]
$\mathscr{D}^{2\alpha}_X$ becomes a Banach space with the norm \((Y,Y')\mapsto \abs{Y_0}+\abs{Y'_0}+\norm{Y ,Y'}_{X, 2\alpha}\). Controlled paths are stable under composition with with regular functions. In fact, it is easy to check that if $f\in C^2_b(W)$ , then $f(Y_t)\in \mathscr{D}^{2\alpha}_X$ with \[f(Y_t)'=f'(Y_t)Y_t'. \]

If $Y_t$ is controlled by $X_t$ with $W=L(\mathbb{R}^d, V)$ for a Banach space $V$. Then the rough integral of $Y_t$ against $X_t$ is defined as
\begin{equation*}
	\int_{0}^{t}Y_sd\boldsymbol{X}_s:=\lim_{{\Delta_n}\rightarrow 0}\sum_{i}Y_{t_{n_i}}\cdot X_{t_{n_i},t_{n_{i+1}}}+Y'_{t_{n_i}}\cdot\mathbb{X}_{t_{n_i},t_{n_{i+1}}},
\end{equation*}
where $\{t_{n_i} \}_{i\geq 1}$ is a sequence of partitions of $[0,t]$ with mesh $\Delta_n$. Note that we use the canonical injection $L(\mathbb{R}^{d}, L(\mathbb{R}^{d}, V))\xhookrightarrow {} L(\mathbb{R}^{d}\otimes\mathbb{R}^d, V) $ in writing $Y'_{t_{n_i}}\cdot\mathbb{X}_{t_{n_i},t_{n_{i+1}}}$. The rough integral can be seen as a map from $\mathscr{D}^{2\alpha}_X(W)$ to $\mathscr{D}^{2\alpha}_X(V)$. Since
\[\left(\int_{0}^{t}Y_sd\boldsymbol{X}_s, Y_t\right)\in \mathscr{D}^{2\alpha}_X(V).  \]
This map is in fact continuous and we have the estimate
\begin{equation*}
	\norm{\left(\int_{0}^{t}Y_sd\boldsymbol{X}_s, Y_t\right)}_{X,2\alpha}\leq \norm{Y}_\alpha+\norm{Y'_t}_{L^\infty}\norm{\mathbb{X}}_{2\alpha}+C\left(\norm{X}_{\alpha}\norm{R^Y}_{2\alpha}+\norm{Y'_t}_{\alpha}\norm{\mathbb{X}}_{2\alpha}\right),
\end{equation*}
where $C$ is a positive constant depends on $\alpha$.

Finally, let $f\in C^2_b(V,L(\mathbb{R}^{d}, V))$. We may consider the rough differential equation driven by $\boldsymbol{X}$ given by
\begin{equation}
	\label{Equation for Z}
	Z_t=z_0+\int_{0}^{t}f(Z_s)d\boldsymbol{X}_s.
\end{equation} 
A process $Z_t\in V$ is said to be a solution \eqref{Equation for Z} if $Z_t\in\mathscr{D}^{2\alpha}_X(V)$ and the integral in \eqref{Equation for Z} holds as a rough integral. We may also consider more general equations like
\begin{equation*}
	dY_t=\sum_{i=1}^{d} V_i(Y_t)d\boldsymbol{X}^i_t+V_0(Y_t)dt,\; Y_0=y_0\in\mathbb{R}^d, \; t\in [0,1].
\end{equation*}
For simplicity, We will follow the usual convention and write
\[Y_t=\pi_V(0,y_0; \boldsymbol{X})(t), \ t\in[0,1]. \]
The map $\pi$ is called the It\^o-Lyons map. Moreover, if $\boldsymbol{X}$ is a geometric $\alpha$-H\"older rough path, then we have
\begin{equation*}
	\lim_{k\rightarrow \infty}\norm{\pi_V(0,y_0; \boldsymbol{X})-\pi_V(0,y_0; \boldsymbol{X}^k)}_\alpha=0,
\end{equation*} 
where $\{\boldsymbol{X}^k\}_{k\geq 1}$ is any sequence such that \eqref{Geometric rough path} is satisfied.
\subsection{Markovian rough paths}

Central to our purpose is the Markovian rough paths. Let $X_t$ be the Markov process generated by \eqref{generator}. For Let $t\in(0,1]$ and \(\{D_n \}_{n\geq 1} \) be a sequence of increasing partitions of interval \([0,t]\) with $\Delta_n\rightarrow 0$. Define
\begin{equation*}
	K^n_{i,j}(X)_t=\sum_{t_k\in D_n}\frac{X^i_{t_{k+1}}+X^i_{t_k}}{2}(X^j_{t_{k+1}}-X^j_{t_k}),
\end{equation*} 
then \(K_{i,j}(X)_t:=\lim_{n\rightarrow \infty} K^n_{i,j}(X)_t \) exists in probability, and the couple \(\boldsymbol{X}_t=(X_t, K_t ) \) is a geometric rough path in \(\mathscr{C}^\alpha([0,1], \mathbb{R}^d) \) for any \(\alpha\in(0,1/2) \) (see \cite{MR2247926}). \(\boldsymbol{X}_t\) is called the canonical rough lift of \(X_t\).

\begin{remark}
	It is worth mentioning that there are other equivalent ways to construct the rough lift of $X_t$, see for example \cite{MR2438699}. 
\end{remark}

\subsection{Malliavin calculus}

We collect some basic materials in Malliavin calculus and refer to \cite{MR2200233} for a complete exploration. 

Let $\mathcal{F}_t$ be the filtration generated by a Brownian motion $W_t$ and $\mathcal{H}=W^{1,2}_0([0,1])$ be the Cameron-Martin space of $W_t$. A $\mathcal{F}_1$-measurable random variable $F$ is said to be cylindrical if it has the form
\begin{equation*}
	F=f(W_{t_1}, W_{t_2}, \cdots ,W_{t_n}),
\end{equation*}
where $f\in C^\infty_b(\mathbb{R}^n, \mathbb{R})$ and $\{t_i\}_{1\leq i\leq n}\in [0,1]$. We denote the collection of all cylindrical random variables $\mathcal{S}$. 

The Malliavin derivative of $F$ is defined as
\begin{equation*}
	DF=\sum_{i=1}^{n}\partial_if(W_{t_1}, W_{t_2}, \cdots ,W_{t_n})1_{[0,t_i]},
\end{equation*}
where $1_{[0,t_i]}$ is the indicator function of interval $[0,t_i]$. For $h\in\mathcal{H}$, we have the following relation between directional derivatives of $F$ and $DF$:
\begin{equation*}
D_hF:=\langle DF, \dot{h} \rangle_{L^2([0,1];\mathbb{R}^d)}=\lim_{\epsilon\rightarrow 0}\frac{f(W_{t_1}+\epsilon h_{t_1}, \cdots,W_{t_n}+\epsilon h_{t_n})-f(W_{t_1}, \cdots ,W_{t_n})}{\epsilon},
\end{equation*}
where $\dot{h}$ is the derivative of $h$. It is often useful to use the canonical isometry between $\mathcal{H}$ and $L^2([0,1])$ and take $DF$ as an element in $\mathcal{H}$. (In fact, on abstract Wiener spaces,  Malliavin derivatives are defined to be random variables in the corresponding Cameron-Martin space; the usual setting for Brownian motion is a very special case.) One could then iterate the previous definition the define the $n$-th Malliavin derivative of $D^nF$ which takes value in $\mathcal{H}^{\otimes n}$. 

For any $p\geq 1$, it is possible to prove that $D^n$ is closable from $\mathcal{S}$ to $L^p(\Omega; \mathcal{H}^{\otimes n})$. We denote by $\mathbb{D}^{n,p}$ the closure of $\mathcal{S}$ with respect to the norm
\begin{equation*}
	\norm{F}_{n,p}=\left( \mathbb{E}\abs{F}^p+\sum_{i=1}^{n}\mathbb{E}\norm{D^iF}_{\mathcal{H}^{\otimes i}}^p        \right)^{\frac{1}{p}},  
\end{equation*}
and $\mathbb{D}^{\infty}=\cap_{n\geq 1}\cap_{p\geq 1}\mathbb{D}^{n,p}$. 

If the vector fields $\{A_i\}_{1\leq i\leq d}, B\in C^\infty_b$, then for any $t\in [0,1]$, $X_t$ defined by
\begin{equation*}
	dX_t=\sum_{i=1}^{d}A_i(X_t)dW^i_t+B(X_t)dt, X_0=x_0\in\mathbb{R}^d
\end{equation*}
belongs to $\mathbb{D}^{\infty}$. Moreover, let $A=(A_1,A_2,\cdots,A_d)$ then $DX_t$ satisfies
\begin{equation*}
	D_rX_t=A(X_r)+\int_{0}^{t}DA_i(X_s)\cdot D_rX_sdW^i_s+\int_{0}^{t}DB(X_s)\cdot D_rX_sds,
\end{equation*}
for $r\leq t$ and $D_rX_t=0$ for $s>t$ a.s. 

Recall the Jacobian process associated with $X_t$ is given by
\begin{equation*}
	J^X_{t\leftarrow 0}=I_{d\times d}+\sum_{i=1}^{d}\int_{0}^{t} DA_i(X_s)\cdot J^X_{s\leftarrow 0}dW^i_s+\int_{0}^{t}DB(X_s)\cdot J^X_{s\leftarrow 0}ds,\ t\in [0,1].
\end{equation*}
The inverse of $J^X_{t\leftarrow 0}$, written as $J^X_{0\leftarrow t}$, is well defined, and we have the following composition property:
\[J^X_{t\leftarrow u}\cdot J^X_{u \leftarrow s}=J^X_{t\leftarrow s}, \]
for $s,u,t\in [0,1]$. The following integrability result is well-known and we refer to theorem 7.2 and remark 7.3 of \cite{MR3298472} for a proof. 
\begin{proposition}
	\label{Integrability}
	Suppose that $\{A_i\}_{1\leq i\leq d}, B\in C^\infty_b(\mathbb{R}^d)$. Define $\Phi_t=(X_t, J^X_{t\leftarrow 0}, J^X_{0\leftarrow t})$. Then for any $\gamma\in(0,\frac{1}{2})$, we have $\norm{\Phi}_\gamma\in L^p(\Omega)$ for any $p\geq 1$. In particular, $\norm{\Phi}_{\infty}=\sup_{t\in[0,1]}\abs{\Phi(t)}\in L^p(\Omega)$ for any $p\geq 1$.
\end{proposition}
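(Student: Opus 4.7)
The plan is to establish the $L^p$ H\"older bounds component-by-component, using the standard machinery of Burkholder-Davis-Gundy (BDG), Gr\"onwall's lemma, and Kolmogorov's continuity criterion. Since $A_i, B \in C^\infty_b$, applying BDG to
\[ X_t - X_s = \sum_{i=1}^d\int_s^t A_i(X_r)\, dW^i_r + \int_s^t B(X_r)\, dr \]
together with the uniform boundedness of $A_i$ and $B$ immediately yields $\mathbb{E}\abs{X_t - X_s}^p \leq C_p\abs{t-s}^{p/2}$ for every $p\geq 2$. Kolmogorov's continuity theorem then gives $\mathbb{E}\norm{X}_\gamma^q<\infty$ for any $\gamma<1/2 - 1/p$ and $q<p$; sending $p\to\infty$ covers every $\gamma\in(0,1/2)$ and every $p\geq 1$.

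For $J^X_{t\leftarrow 0}$, I would observe that the defining equation is linear in $J$ with coefficients $DA_i(X_s)$ and $DB(X_s)$ that are globally bounded. A standard Gr\"onwall argument applied to $t\mapsto \mathbb{E}\sup_{s\leq t}\abs{J^X_{s\leftarrow 0}}^p$, using BDG to control the stochastic integral, yields $\mathbb{E}\sup_{t\in[0,1]}\abs{J^X_{t\leftarrow 0}}^p<\infty$ for every $p\geq 1$. Applying BDG again to the increments $J^X_{t\leftarrow 0}-J^X_{s\leftarrow 0}$ and invoking the previous sup-bound gives $\mathbb{E}\abs{J^X_{t\leftarrow 0}-J^X_{s\leftarrow 0}}^p\leq C_p\abs{t-s}^{p/2}$, after which Kolmogorov again delivers the desired $L^p$-H\"older bound.

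The main obstacle is the third component $J^X_{0\leftarrow t}$, since it is defined as an inverse rather than directly as the solution of an explicit SDE. My approach would be to differentiate the identity $J^X_{t\leftarrow 0}\cdot J^X_{0\leftarrow t}=I$ via It\^o's formula to derive a linear SDE of the form
\[ dJ^X_{0\leftarrow t}=-\sum_{i=1}^d J^X_{0\leftarrow t}\cdot DA_i(X_t)\, dW^i_t+(\text{bounded drift terms})\, dt, \]
in which the drift absorbs $-J^X_{0\leftarrow t}\cdot DB$ together with an It\^o correction built from $(DA_i)^2$. The delicate point is verifying that $J^X_{0\leftarrow t}$ is genuinely a continuous, finite semimartingale (rather than having a blow-up before one can even talk about moments); this is handled by a localization argument using stopping times at which $\abs{J^X_{0\leftarrow t}}$ exceeds a level $N$, passing to the limit $N\to\infty$ after the $L^p$-bound is established. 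Once the SDE is in hand, the Gr\"onwall-BDG-Kolmogorov pipeline of the previous step applies verbatim. Combining the three pieces, and noting that the H\"older semi-norm (respectively the sup-norm) of the triple $\Phi_t$ is bounded by the sum of those of its components, yields the stated conclusion.
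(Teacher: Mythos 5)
Your argument is correct, and it is a legitimately different route from the paper's: the paper does not prove this proposition at all, but defers to Theorem 7.2 and Remark 7.3 of the cited reference \cite{MR3298472}, where integrability of the Jacobian is obtained in the much more general setting of Gaussian rough differential equations via the Cass--Litterer--Lyons-type tail estimates (greedy partitions of the accumulated $p$-variation), yielding Weibull tails for $\norm{\Phi}_\gamma$. In the present Markovian setting the driver is an honest Brownian SDE with $C^\infty_b$ coefficients, so your classical BDG--Gr\"onwall--Kolmogorov pipeline suffices and is more elementary and self-contained, at the price of not generalizing beyond the semimartingale case; the citation buys generality (and sharper tail information) that the paper does not actually need here. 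One small streamlining for your third step: rather than differentiating the identity $J^X_{t\leftarrow 0}J^X_{0\leftarrow t}=I$ and then worrying about whether the inverse is a priori a finite continuous semimartingale, it is cleaner to \emph{define} $Z_t$ as the solution of the explicit linear SDE
\begin{equation*}
dZ_t=-\sum_{i=1}^d Z_t\,DA_i(X_t)\,dW^i_t-Z_t\Big(DB(X_t)-\sum_{i=1}^d \big(DA_i(X_t)\big)^2\Big)dt,\qquad Z_0=I,
\end{equation*}
which has bounded coefficients and hence all the moment and H\"older bounds from your second step, and then check $Z_tJ^X_{t\leftarrow 0}=I$ by It\^o's product rule; this identifies $Z_t=J^X_{0\leftarrow t}$ and removes the need for the localization argument entirely. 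With that adjustment (or with your localization carried out carefully), the component-wise bounds combine exactly as you say to give $\norm{\Phi}_\gamma\in L^p(\Omega)$ for all $\gamma\in(0,\tfrac12)$ and $p\geq 1$.
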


By uniqueness, it is straightforward to see that, for any $r\leq t$ we have
\begin{equation}
	\label{Malliavin derivative for X_t}
	D_rX_t=J^X_{t \leftarrow r}A(X_r).
\end{equation}

Another fundamental object in Malliavin calculus is given by the next
\begin{definition}
	Let $F=(F^1, F^2, \cdots, F^d)$ be a random vector with whose components belong to $\mathbb{D}^{1,1}$. The Malliavin matrix $\Gamma(F)$ is defined as
	\begin{equation*}
		\Gamma(F)=(\langle DF^i,DF^j \rangle_{\mathcal{H}})_{1\leq i,j\leq d}.
	\end{equation*}
\end{definition}
An estimate on the Malliavin matrix allows one to prove the existence and smoothness of density.
\begin{proposition}
	\label{Smoothness}
	Let $F=(F^1, F^2, \cdots, F^d)$ be a random vector whose components belong to $\mathbb{D}^{\infty}$. If $(\det \Gamma(F) )^{-1}\in L^p(\Omega)$ for all $p\geq 1$, then
	$F$ has a smooth density with respect to the Lebesgue measure on $\mathbb{R}^d$.
\end{proposition}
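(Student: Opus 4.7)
The plan is to use the classical Malliavin integration-by-parts machinery and then conclude via a Fourier criterion for smooth densities. First I would show that the inverse Malliavin matrix $\gamma(F) := \Gamma(F)^{-1}$ has entries in $\mathbb{D}^{\infty}$. By Cramer's rule, $\gamma_{ij}(F)$ equals a polynomial in the entries of $\Gamma(F)$ divided by $\det \Gamma(F)$. Since $F \in (\mathbb{D}^{\infty})^d$ gives $\Gamma_{ij}(F) \in \mathbb{D}^{\infty}$ and the hypothesis provides $(\det \Gamma(F))^{-1} \in \bigcap_{p\geq 1} L^p(\Omega)$, a careful chain-rule argument on Sobolev--Malliavin spaces yields $\gamma_{ij}(F) \in \mathbb{D}^{\infty}$ as well.

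Second, I would derive the one-step integration-by-parts formula. For $\varphi \in C_b^\infty(\mathbb{R}^d)$, the chain rule gives $D[\varphi(F)] = \sum_{j=1}^{d} \partial_j \varphi(F)\, DF^j$ in $\mathcal{H}$. Taking the $\mathcal{H}$-inner product with $DF^i$ produces $\sum_j \Gamma_{ij}(F) \partial_j \varphi(F)$; contracting with $\gamma_{ki}(F)$ and summing recovers $\partial_k \varphi(F)$. Applying the duality $\mathbb{E}[\langle DG, u\rangle_{\mathcal{H}}] = \mathbb{E}[G\,\delta(u)]$ between the Malliavin derivative and the Skorohod integral $\delta$, with the admissible weight $u = \gamma_{ki}(F) DF^i \in \mathbb{D}^{\infty}(\mathcal{H})$, yields
\begin{equation*}
\mathbb{E}[\partial_k \varphi(F)] = \mathbb{E}[\varphi(F)\, H_k], \qquad H_k := \sum_{i=1}^{d} \delta\bigl(\gamma_{ki}(F)\, DF^i\bigr) \in \bigcap_{p\geq 1} L^p(\Omega).
\end{equation*}
Iterating, an induction on $|\alpha|$ produces $\mathbb{E}[\partial^\alpha \varphi(F)] = \mathbb{E}[\varphi(F)\, H_\alpha]$ with $H_\alpha \in \bigcap_{p} L^p(\Omega)$, where at each step one uses the continuity of $\delta$ on the higher-order spaces $\mathbb{D}^{k,p}(\mathcal{H})$ to propagate Malliavin smoothness of the weights generated by differentiating $\gamma(F)$ and $DF^i$.

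Finally, I would conclude by a Fourier argument. Approximating $\varphi(x) = e^{i \xi \cdot x}$ by compactly supported smooth functions and passing to the limit by bounded convergence gives, for every multi-index $\alpha$,
\begin{equation*}
|\xi^\alpha|\,|\phi_F(\xi)| = \bigl|\mathbb{E}\bigl[\partial^\alpha e^{i\xi \cdot F}\bigr]\bigr| \leq \|H_\alpha\|_{L^1(\Omega)},
\end{equation*}
where $\phi_F$ is the characteristic function of $F$. Hence $\phi_F$ decays faster than any polynomial, so its inverse Fourier transform, the density $p_F$, exists and lies in $C^\infty(\mathbb{R}^d)$.

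The main technical obstacle lies in the first step: one must show that negative-moment control of $\det \Gamma(F)$ propagates through the chain rule when differentiating $\gamma(F)$ in the Malliavin sense, and that each successive application of $\delta$ in the iterative argument keeps the accumulated weights inside $\mathbb{D}^{\infty}(\mathcal{H})$. This reduces to a bookkeeping exercise resting on Meyer's inequalities and the continuity estimates for $\delta$, but it is precisely where the hypothesis $(\det \Gamma(F))^{-1} \in \bigcap_{p} L^p$ is used in full strength.
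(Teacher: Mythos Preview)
Your argument is the standard Malliavin integration-by-parts proof and is correct as outlined; the bookkeeping you flag (Meyer's inequalities, continuity of $\delta$ on $\mathbb{D}^{k,p}(\mathcal{H})$, and propagation of negative moments through Cramer's rule) is exactly what is needed and goes through without surprises.

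Note, however, that the paper does not give its own proof of this proposition: it is stated in the preliminaries with a reference to Nualart's book \cite{MR2200233}, where the argument you sketch is carried out in full detail. So there is no ``paper's proof'' to compare against beyond the cited source, and your outline matches that source.
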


\section{Malliavin derivative of $Y_t$}

This section is devoted to the computation of the Malliavin derivative of \(Y_t\) defined in theorem \ref{main1}. Let \(\{D_n \}_{n\geq 1} \) be a sequence of increasing partitions of \([0,t] \), with the mesh $\Delta_n\rightarrow 0$. For any process \(Z \) defined on \([0,t] \), we use \(Z^n \) to represent the piece-wise linear approximation of \(Z\) along \(D_n \).  

Similar to $X_t$, the associated Jacobian process $J^Y_{t\leftarrow 0}$ is a $d\times d$ matrix-valued process given by
\begin{equation*}
dJ^Y_{t\leftarrow 0}=\sum_{i=1}^{d} DV_i(Y_t)J^Y_{t\leftarrow 0}d\boldsymbol{X}^i_t+DV_0(Y_t)J^Y_{t\leftarrow 0}dt,\; J^Y_{0\leftarrow 0}=I_{d\times d}, \; t\in [0,1].
\end{equation*}

Now let us write
\begin{equation*}
Y_t(\omega)=y_0+\sum_{i=1}^{d}\int_{0}^{t}V_i(Y_s)d \boldsymbol{X}_s^i(\omega)+\int_{0}^{t}V_0(Y_s)ds,\; y_0\in\mathbb{R}^d,  \; t\in[0,1],
\end{equation*}
where \(\omega \) represents the underlying Brownian paths. For every \( h \in\mathcal{H} \), we have by definition 
\begin{equation}
\label{definition of DY}
D_hY^j_t=\langle DY^j_t,\dot{h} \rangle_{L^2([0,1]) }=\lim_{\epsilon\rightarrow 0} \frac{Y^j_t(\omega+\epsilon h)-Y^j_t(\omega)}{\epsilon} ,\;\; 1\leq j\leq d.
\end{equation}
Here 
\begin{equation*}
	Y_t(\omega+\epsilon h)=y_0+\int_{0}^{t}V_0(Y_s(\omega+\epsilon h))ds+\sum_{i=1}^{d}\int_{0}^{t}V_i(Y_s(\omega+\epsilon h)) d\boldsymbol{X}(\omega+\epsilon h)^i_s.
\end{equation*}
Note that \(X(\omega+\epsilon h) \) is the diffusion given by
\begin{equation*}
X_t(\omega+\epsilon h)=x_0+\sum_{i=1}^{d} \int_{0}^{t}A_i(X_s(\omega+\epsilon h))d(W+\epsilon h)^i_s+\int_{0}^{t}B(X_s(\omega+\epsilon h))ds.
\end{equation*}
Hence, the rough lift $\boldsymbol{X}(\omega+\epsilon h)_t$ is well defined.
	\begin{proposition}
	Let \(J^Y_{t\leftarrow0 }\) be the Jacobian process of \(Y_t\). Under assumptions of theorem \ref{main1} and for all \( t\in [0,1] \), \(h\in \mathcal{H}\), we have 
	\begin{equation*}
		D_hY_t=\sum_{i=1}^{d}J^Y_{t\leftarrow 0 }  \int_{0}^{t}J^Y_{0\leftarrow s } V_i(Y_s )d \boldsymbol{D_hX}^i_s.
	\end{equation*}
	almost surely.
\end{proposition}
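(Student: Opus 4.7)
The strategy is to reduce to the piecewise-linear case, where the equation becomes a classical ODE and the identity is the standard variation-of-parameters formula for ODEs with a perturbed driving signal. Fix $h \in \mathcal{H}$, let $X^n$ denote the piecewise-linear interpolation of $X(\omega)$ along $D_n$ with canonical rough lift $\boldsymbol{X}^n$, and set $Y^n_t = \pi_V(0, y_0; \boldsymbol{X}^n)(t)$; since $\boldsymbol{X}^n$ has bounded variation, this solves a classical ODE. Because $Y^n_t$ is a smooth cylindrical functional of the Brownian increments at the partition points of $D_n$, its Malliavin derivative $D_h Y^n_t = \partial_\epsilon Y^n_t(\omega + \epsilon h)|_{\epsilon=0}$ can be computed classically. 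The corresponding linearized ODE has inhomogeneity driven by $\xi^n_t := D_h X^n_t$, namely the piecewise-linear interpolation along $D_n$ of the BV path
\begin{equation*}
D_h X_t = \int_0^t J^X_{t\leftarrow r} A(X_r)\, \dot h_r\, dr,
\end{equation*}
and variation of parameters yields
\begin{equation*}
D_h Y^n_t = \sum_{i=1}^{d} J^{Y^n}_{t\leftarrow 0} \int_0^t J^{Y^n}_{0\leftarrow s} V_i(Y^n_s)\, d\xi^{n,i}_s.
\end{equation*}

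The next step is to let $n \to \infty$. For the right-hand side, Wong--Zakai type convergence for the canonical Markovian rough lift gives $\boldsymbol{X}^n \to \boldsymbol{X}$ in $\mathscr{C}^\alpha$ in probability, and continuity of the Itô--Lyons map and its Jacobian version then yields $Y^n \to Y$, $J^{Y^n}_{\cdot \leftarrow 0} \to J^Y_{\cdot \leftarrow 0}$, and $J^{Y^n}_{0\leftarrow \cdot} \to J^Y_{0\leftarrow \cdot}$ in $\alpha$-Hölder topology, with the corresponding uniform-in-$n$ $L^p$-bounds standard in this context (the analogue of Proposition \ref{Integrability} for the RDE satisfied by $Y^n$). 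Meanwhile $\xi^n \to D_h X$ in total variation, since it is literally its piecewise-linear interpolation. Combining these convergences, and using that $D_h X$ has bounded variation so that the rough integral reduces to a Young/Riemann--Stieltjes one, the right-hand side converges to $\sum_i J^Y_{t\leftarrow 0} \int_0^t J^Y_{0\leftarrow s} V_i(Y_s)\, d\boldsymbol{D_h X}^i_s$.

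The main obstacle will be the left-hand side: justifying $D_h Y^n_t \to D_h Y_t$. The natural route is to upgrade $Y^n_t \to Y_t$ to convergence in $\mathbb{D}^{1,p}$ for every $p \geq 1$, at which point closability of the Malliavin derivative on $\mathbb{D}^{1,p}$ identifies $\lim_n D_h Y^n_t$ with $D_h Y_t$. The explicit formula for $D_h Y^n_t$ derived above, together with the uniform $L^p$-integrability of $J^{Y^n}$ and the $L^2$-integrability of $\dot h$, supplies exactly the $L^p(\Omega; \mathcal{H})$-bounds on $DY^n_t$ that are needed, uniformly in $n$; once this uniform control is in hand, closability yields the almost sure identity asserted in the statement.
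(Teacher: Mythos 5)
Your overall skeleton is the same as the paper's (piecewise-linear approximation of $X$, Duhamel/variation of parameters for the approximating ODE, then pass to the limit; your use of closability of $D$ to identify $\lim_n D_hY^n_t$ with $D_hY_t$ is if anything more careful than the paper's interchange of the limits in $\epsilon$ and $n$). However, there is a genuine error in your treatment of the right-hand side: $D_hX$ is \emph{not} a path of bounded variation in this setting. From $D_rX_t=J^X_{t\leftarrow r}A(X_r)$ one gets $D_hX_t=J^X_{t\leftarrow 0}\int_0^t J^X_{0\leftarrow r}A(X_r)\dot h_r\,dr$, and since $a$ is assumed non-constant (indeed $dA$ is uniformly non-degenerate by Assumption \ref{Assumption 3}), the Jacobian $J^X_{t\leftarrow 0}$ carries a non-trivial martingale part $\sum_i\int_0^t DA_i(X_s)J^X_{s\leftarrow 0}\,dW^i_s$; hence $D_hX$ is a semimartingale with non-vanishing quadratic variation and has infinite total variation almost surely. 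Consequently your claims that ``$\xi^n\to D_hX$ in total variation'' and that ``the rough integral reduces to a Young/Riemann--Stieltjes one'' both fail, and with them the stated justification that the right-hand side converges. This is not a peripheral detail: the fact that $d\boldsymbol{D_hX}$ must be read as a genuine rough (equivalently Stratonovich) integral, rather than a degenerate or Riemann--Stieltjes one, is exactly the non-Gaussian feature the paper isolates (see Remark \ref{Difference}); only when $a$ is constant does $D_hX$ become absolutely continuous.

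To repair the limit on the right-hand side you need a rough-path (or Wong--Zakai) argument rather than a $1$-variation one: for instance, observe that $(X,D_hX)$ is jointly a diffusion driven by $W$ (with the extra drift $A(X_t)\dot h_t\,dt$ in the second component), so the piecewise-linear interpolations converge to the canonical joint rough lift, and then invoke continuity of the It\^o--Lyons map / rough integration for the coupled system, which is in effect what the paper's ``sending $n$ to infinity'' step relies on, with the limit written as the rough integral $\sum_i\int_0^t J^Y_{0\leftarrow s}V_i(Y_s)\,d\boldsymbol{D_hX}^i_s$. With that substitution, and keeping your $\mathbb{D}^{1,p}$/closability argument for the left-hand side, the proof goes through.
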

\begin{remark}
	Although this result is straightforward, to the author's best knowledge it is not written in any standard reference. We include a proof for completeness. 
\end{remark}

\begin{proof}
	We first note that, $X^n$ has bounded variation almost surely. Thus, $\pi_V(0,y_0;\boldsymbol{X^n})$ is just the classical It\^o map and we will simply write it as $\pi_V(0,y_0;X^n)$.	We know that $\pi_V(0,y_0;\boldsymbol{X})=\lim_{n\rightarrow \infty}\pi_V(0,y_0;X^n)$ uniformly. As a result
	\begin{equation*}
		\lim_{\epsilon\rightarrow0}\frac{Y_t(\omega+\epsilon h)-Y_t(\omega)}{\epsilon}=\lim_{\epsilon\rightarrow0}\lim_{n\rightarrow \infty}\frac{\pi_V(0,y_0;(X+\epsilon h)^n)(t)-\pi_V(0,y_0;X^n)(t)}{\epsilon}.
	\end{equation*}
	On the other hand, we have
\[  \lim_{\epsilon\rightarrow 0}\frac{(X+\epsilon h)^n-X^n}{\epsilon}=D_hX^n.\]
By Duhamel's principle of ODE (see for example \cite{friz2010multidimensional} section 4.1 and 4.2), we have
\begin{equation*}
	\lim_{\epsilon\rightarrow 0}\frac{\pi_V(0,y_0;(X+\epsilon h)^n)(t)-\pi_V(0,y_0;X^n)(t)}{\epsilon}=\sum_{i=1}^{d}\int_{0}^{t}J^{Y_n}_{t\leftarrow s} V_i(Y_n(s))d (D_hX^{n})^i_s,
\end{equation*}
where $Y_n(t)=\pi_V(0,y_0;X^n)(t)$. Finally, sending $n$ to infinity gives
\begin{equation*}
	\lim_{\epsilon\rightarrow0}\frac{Y_t(\omega+\epsilon h)-Y_t(\omega)}{\epsilon}=\sum_{i=1}^{d}\int_{0}^{t}J^{Y}_{t\leftarrow s} V_i(Y_s)d \boldsymbol{D_hX}^i_s.
\end{equation*}
\end{proof}

\begin{corollary}
	Let \(J^Y_{t\leftarrow0 }\) be the Jacobian process of \(Y_t\). Under assumptions of theorem \ref{main1} and for all \( t\in [0,1] \) and $1\leq j\leq d$, we have 
	\begin{equation}
		\label{Malliavin derivative}
D^j_rY_t=\sum_{i=1}^{d}J^Y_{t\leftarrow 0 } \int_{0}^{t}J^Y_{0\leftarrow s } V_i(Y_s )d\boldsymbol{D}^j_r\boldsymbol{X}^i_s.
	\end{equation}
\end{corollary}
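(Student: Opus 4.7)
The plan is to deduce the pointwise formula for \(D^j_r Y_t\) directly from the directional-derivative identity proved in the preceding proposition, by testing against a dense family of Cameron--Martin directions. I will not need to redo the Duhamel/limit argument, only to interpret the conclusion through the canonical pairing
\[
D_h F = \sum_{j=1}^d \int_0^1 D^j_r F \, \dot h^j_r\, dr
\]
which relates the directional and kernel forms of the Malliavin derivative of an \(\mathbb{R}^d\)-valued random variable \(F\).

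First I would record that, by \eqref{Malliavin derivative for X_t} and Proposition \ref{Integrability}, for each fixed \(r\) the process \(s \mapsto D^j_r X_s\) is a semimartingale on \([r,1]\) (vanishing for \(s<r\)). In particular, its canonical rough lift \(\boldsymbol{D}^j_r \boldsymbol{X}\) coincides with the Stratonovich lift, and by linearity the same is true of \(\boldsymbol{D_hX}\) for every \(h\in\mathcal{H}\). Consequently, both the rough integral appearing in the proposition and the one appearing on the right-hand side of \eqref{Malliavin derivative} reduce to classical Stratonovich integrals, which is the setting in which the exchange of integrals below is transparent.

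Next I would apply a stochastic Fubini argument. Since \(D_h X^i_s = \sum_{j=1}^d \int_0^1 D^j_r X^i_s\, \dot h^j_r\, dr\), and since the integrand \(J^Y_{0\leftarrow s}V_i(Y_s)\) depends on neither \(r\) nor \(h\), the standard stochastic Fubini theorem gives
\[
\int_0^t J^Y_{0\leftarrow s} V_i(Y_s)\, d\boldsymbol{D_hX}^i_s \;=\; \sum_{j=1}^d \int_0^1 \left(\int_0^t J^Y_{0\leftarrow s} V_i(Y_s)\, d\boldsymbol{D}^j_r \boldsymbol{X}^i_s\right) \dot h^j_r\, dr.
\]
Substituting this into the proposition and comparing the result with \(D_h Y_t = \sum_{j=1}^d\int_0^1 D^j_r Y_t\, \dot h^j_r\, dr\), the arbitrariness of \(h\in\mathcal{H}\) forces the claimed identity for almost every \(r\in[0,t]\); extending by zero on \((t,1]\) gives the statement.

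The main obstacle is justifying the Fubini exchange, which mixes the Lebesgue integral in \(r\) with a Stratonovich integral in \(s\). This is where the reduction to classical stochastic calculus (noted by the author in the introduction) pays off: the two-parameter process \((r,s)\mapsto J^X_{s\leftarrow r} A(X_r)\) is jointly regular, and the integrand \(J^Y_{0\leftarrow s}V_i(Y_s)\) has all moments by Assumption \ref{Assumption 1} together with Proposition \ref{Integrability} applied both to \(X\) and to \(Y\). These are precisely the hypotheses needed for the classical stochastic Fubini theorem, so the exchange is legitimate and no rough-path version of Fubini is required.
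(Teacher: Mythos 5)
Your proof is correct in substance, but it follows a genuinely different route from the paper's. The paper never invokes a stochastic Fubini theorem: it performs the exchange between the pairing with $\dot h$ and the integral at the level of the piecewise-linear approximations $X^n$ from the preceding proposition, where $d(D_hX^n)$ is an honest Riemann--Stieltjes integrator, so the identity $\int_0^t J^{Y_n}_{t\leftarrow s}V_i(Y_n(s))\,d(D_hX^n)^i_s=\sum_j\langle \int_0^t J^{Y_n}_{t\leftarrow s}V_i(Y_n(s))\,d(D^j_rX^n)^i_s,\dot h^j_r\rangle_{L^2}$ is immediate from linearity in the integrator and ordinary Fubini, and then it passes to the limit $n\to\infty$. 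You instead work directly with the limiting objects, exchange the Lebesgue integral in $r$ with the Stratonovich integral in $s$, and identify the kernel by testing against all $h\in\mathcal H$; this avoids re-entering the approximation scheme, at the price of a Fubini argument that needs two clarifications to be airtight. First, the standard stochastic Fubini theorem moves a parameter sitting in the \emph{integrand}, whereas here $r$ sits in the \emph{integrator}; to reduce to the standard statement you should use \eqref{Malliavin derivative for X_t} in the factorized form $D^j_rX_s=J^X_{s\leftarrow 0}\,J^X_{0\leftarrow r}A(X_r)$ for $s\geq r$, so that all integrals are taken against the single semimartingale $J^X_{s\leftarrow 0}$ with an $r$-dependent, $s$-constant matrix on the right, and you must account for the jump of $s\mapsto D^j_rX_s$ at $s=r$: its contribution $J^Y_{0\leftarrow r}V_i(Y_r)A(X_r)$, integrated in $r$, is exactly the absolutely continuous part $A(X_s)\dot h_s\,ds$ of $dD_hX_s$, so the bookkeeping closes (Proposition \ref{Integrability} and Assumption \ref{Assumption 1} supply the integrability). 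Second, ``arbitrariness of $h$'' only yields the identity off an $h$-dependent null set; take a countable dense family of $h$ in $\mathcal H$ and use the a.s.\ continuity in $r$ of both sides (in the spirit of Lemma \ref{Holder regularity}) to conclude for every $r\in[0,t]$. With these points made explicit your argument is complete; what the paper's route buys is that no stochastic Fubini is needed at all, while yours stays entirely at the level of the limit processes.
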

\begin{proof}
	By definition we have 
	\begin{equation*}
		D_hX^i_t=\sum_{j=1}^{d}\langle D^j_rX^i_t, \dot{h}^j_r \rangle_{L^2([0,1])}.
	\end{equation*}
		With same notations as previous proposition, we have
		\begin{equation*}
			\sum_{i=1}^{d}\int_{0}^{t}J^{Y_n}_{t\leftarrow s} V_i(Y_n(s))d (D_hX^{n})^i_s=\sum_{j=1}^{d}\langle \sum_{i=1}^{d}\int_{0}^{t}J^{Y_n}_{t\leftarrow s} V_i(Y_n(s))d (D^j_rX^{n})^i_s, \dot{h}^j_r \rangle_{L^2([0,1])}.
		\end{equation*}
	Taking the limit and we immediately have
	\begin{equation*}
		D^j_rY_t=\sum_{i=1}^{d}J^Y_{t\leftarrow 0 } \int_{0}^{t}J^Y_{0\leftarrow s } V_i(Y_s )d\boldsymbol{D}^j_r\boldsymbol{X}^i_s.
	\end{equation*}
\end{proof}

\begin{remark}
	\label{Difference}
	We can make a quick comparison between \eqref{Malliavin derivative} and \eqref{Malliavin derivative for X_t}. Note that if \(a(x)\equiv I_{d\times d} \), then \(X_t\) is a standard Brownian motion and \(D_rX_t=I_{d\times d} \cdot\boldsymbol{1}_{[0,t]}(r) \). If we fix $r$, $D_rX_t$ is diagonal and a pure jump process with a jump at $r$. By \eqref{Malliavin derivative} we have
	\begin{equation}
		\label{Gaussian special}
		D^j_rY_t=\sum_{i=1}^{d}J^Y_{t\leftarrow 0 } \int_{0}^{t}J^Y_{0\leftarrow s } V_i(Y_s )d\boldsymbol{D}^j_r\boldsymbol{X}^i_s=J^Y_{t\leftarrow r } V_j(Y_r ),
	\end{equation}
which recovers \eqref{Malliavin derivative for X_t}. We see that in \eqref{Gaussian special}, the rough integral degenerates into evaluation at a single point and this happens when \(X_t\) is Gaussian; because Malliavin derivatives of Gaussian processes are always pure jump processes when $r$ is fixed. Our goal is to study the more general rough integral formula \eqref{Malliavin derivative}; that is why we assume that $a(x)$ is not constant. 
\end{remark}
\begin{remark}
	\label{Why that assumption}
	Now is a good time tp further discuss the motivation of \textbf{\emph{Assumptions} \ref{Assumption 3}}. One crucial step in the proof of a H\"ormander's type theorem is the implication that 
	\begin{equation}
		\label{Important step}
		\left\{ \inf_{\norm{v}= 1}v^T\cdot \langle DY_t, DY_t \rangle_{\mathcal{H}}\cdot v\leq \epsilon  \right\}\Rightarrow \left\{ \norm{J^Y_{0\leftarrow s }V_i(Y_s )}_\infty\leq \epsilon^\alpha \right\},
	\end{equation}
	for some $\alpha>0$ and $1\leq i \leq d$. When $X_t$ is Brownian motion, this step was done using a non-property of the $L^2$ norm (see lemma A.3 of \cite{MR2814425}). For the case where $X_t$ is a non-degenerate Gaussian process, this is done by an interpolation inequality (see theorem 6.9 of \cite{MR3298472}). Since we have $D_rY_t=J^Y_{t\leftarrow r }V(Y_r )$ when $X_t$ is Gaussian, in all previous cases \eqref{Important step} can be roughly understood as
	\begin{equation*}
		\left\{ \inf_{\norm{v}= 1}v^T\cdot \langle DY_t, DY_t \rangle_{\mathcal{H}}\cdot v\leq \epsilon  \right\}\Rightarrow \left\{\norm{DY_t}_\infty\leq \epsilon^\alpha \right\}.
	\end{equation*} 
	
	However, in our case, due to the integral representation \eqref{Malliavin derivative}, same type of argument only gives
	\begin{equation*}
		\left\{ \inf_{\norm{v}= 1}v^T\cdot \langle DY_t, DY_t \rangle_{\mathcal{H}}\cdot v\leq \epsilon  \right\}\Rightarrow \left\{\norm{\int_{0}^{t}J^Y_{0\leftarrow s }V_i(Y_s )d\boldsymbol{D}_r\boldsymbol{X}^i_s}_\infty \leq \epsilon^\beta  \right\},
	\end{equation*} 
	for some $\beta>0$, $1\leq i\leq d$. We still need to further justify that 
	\begin{equation}
		\label{General case}
		 \left\{ \norm{\int_{0}^{t}J^Y_{0\leftarrow s }V_i(Y_s )d\boldsymbol{D}_r\boldsymbol{X}^i_s}_\infty \leq \epsilon^\beta \right\} \Rightarrow \left\{ \norm{J^Y_{0\leftarrow s }V_i(Y_s )}_\infty\leq \epsilon^\alpha \right\}.
	\end{equation}
	To prove \eqref{General case}, it requires \(D_rX_s=J^X_{s\leftarrow r } A(X_r ) \) to be non-degenerate. Since $A$ is an elliptic system, the behavior of $D_rX_s$ essentially depends on the Jacobian process of \(X_t\). Recall 
	\begin{equation*}
		dJ^X_{t\leftarrow 0}=\sum_{i=1}^{d}DA_i(X_t)J^X_{t\leftarrow 0}dW^i_t+DB(X_t)J^X_{t\leftarrow 0}dt.
	\end{equation*}
	Thus, a control on $(DA_1,DA_2, \cdots ,DA_d)=dA$ will be sufficient to give the desired non-degeneracy. From this perspective, \textbf{\emph{Assumptions} \ref{Assumption 3}} is very natural.  
\end{remark}


\section{Small ball estimate}
This section is devoted to developing technical tools necessary for the proof of our main result.

  \subsection{H\"older roughness of diffusion}
  
  Norris' type lemmas are crucial in the proofs of H\"ormander's type theorems. In classical stochastic analysis theory, it is nothing but a quantitative version of the Doob-Meyer decomposition of semi-martingales. In the context of rough differential equations, we are going to use a deterministic version of Norris' lemma, which first appeared in \cite{MR3112925} and was improved in \cite{MR3298472}. We start with a definition.
  \begin{definition}
  	Let \(\theta\in(0,1) \). A path \(X:[0,1]\rightarrow \mathbb{R}^d \) is called \(\theta \)-H\"older rough if there exists a constant \(c > 0\) such that for every \(s \) in \([0,1] \), every \(\epsilon \) in \((0,\frac{1}{2}] \), and every \(\phi\in \mathbb{R}^d \) with \(\abs{\phi}=1 \), there exists \(t\) in \([0,1] \) such that \(\epsilon/2 < \abs{t-s}<\epsilon  \) and
  	\begin{equation*}
  		\abs{\langle \phi, X_{s,t} \rangle }>c\epsilon^\theta.
  	\end{equation*}
  	The largest such constant is called the modulus of \(\theta \)-roughness, and is denoted by \(L_\theta(X)\).
  \end{definition}
  Now we can state the Norris type result that we are going to use and refer to \cite{MR3298472} for a proof. 
  
  \begin{proposition}
  	\label{Norris}
  	Let \(\boldsymbol{X} \) be a geometric $\alpha$-H\"older rough path and assume that \(X \) is a \(\theta \)-H\"{o}lder rough with \(2\alpha>\theta \). Let \(Y\) be a  \(\mathbb{R}^d \)-valued path controlled by $X$, and set
  	\begin{equation*}
  		Z_t=\sum_{i=1}^{d} \int_{0}^{t}Y^i_sd\boldsymbol{X}^i_s+\int_{0}^{t}b_sds,
  	\end{equation*}
  	where $b$ is an $\alpha$-H\"older continuous function. Then there exists constants \(l>0\) and \(q>0\) such that, setting
  	\begin{equation*}
  		\mathcal{A}:=1+L_{\theta}(X)^{-1}+\rho_{\alpha}(\boldsymbol{X})+\norm{Y }_{\alpha }+\norm{Y'}_{\alpha}+\norm{b}_{\alpha},
  	\end{equation*}
  	one has the bound
  	\begin{equation*}
  		\norm{Y}_{\infty}+\norm{b}_{\infty}\leq M\mathcal{A}^q\norm{Z}^l_{\infty}
  	\end{equation*}
  	where \(M\) depends on the dimension \(d \) and \(Y\).
  \end{proposition}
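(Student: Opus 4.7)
The plan is to exploit the Gubinelli local expansion of the rough integral defining $Z$ to reduce the global smallness $\norm{Z}_\infty\leq\delta$ to a pointwise three-term inequality in $Y_s$, $Y_s'\mathbb{X}_{s,t}$ and $b_s(t-s)$ modulo a super-linear remainder, and then invoke the $\theta$-H\"older roughness of $X$ at each scale to extract successive sup-norm bounds first on $Y$ and then on $b$.

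By the sewing lemma applied to the rough integral, there exist constants $c_1,q_1>0$ depending only on $\alpha$ such that
\begin{equation*}
\abs{Z_{s,t}-Y_sX_{s,t}-Y_s'\mathbb{X}_{s,t}-b_s(t-s)}\leq c_1\mathcal{A}^{q_1}\abs{t-s}^{3\alpha},
\end{equation*}
for all $0\leq s\leq t\leq 1$. Writing $\delta:=\norm{Z}_\infty$, this yields
\begin{equation*}
\abs{Y_sX_{s,t}+Y_s'\mathbb{X}_{s,t}+b_s(t-s)}\leq 2\delta+c_1\mathcal{A}^{q_1}\abs{t-s}^{3\alpha}.
\end{equation*}
For fixed $s$ with $Y_s\neq 0$, set $\phi:=Y_s/\abs{Y_s}$. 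By the $\theta$-H\"older roughness of $X$, for each $\epsilon\in(0,1/2]$ one can find $t\in[0,1]$ with $\epsilon/2<\abs{t-s}<\epsilon$ and $\abs{\langle\phi,X_{s,t}\rangle}\geq L_\theta(X)\epsilon^\theta$. Pairing the displayed inequality with $\phi$ and bounding $\abs{Y'_s\mathbb{X}_{s,t}}\leq \mathcal{A}^2\epsilon^{2\alpha}$ and $\abs{b_s(t-s)}\leq \mathcal{A}\epsilon$ gives
\begin{equation*}
\abs{Y_s}L_\theta(X)\epsilon^\theta\leq 2\delta+c_2\mathcal{A}^{q_2}\epsilon^{2\alpha\wedge 1}.
\end{equation*}
Since $2\alpha>\theta$ and $1>\theta$, optimizing $\epsilon$ so that the two right-hand terms are comparable gives
\begin{equation*}
\norm{Y}_\infty\leq c_3\mathcal{A}^{q_3}\delta^{\beta_1},\qquad \beta_1:=\frac{(2\alpha\wedge 1)-\theta}{2\alpha\wedge 1}>0.
\end{equation*}

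With $\norm{Y}_\infty$ bounded by a positive power of $\delta$, I return to the local expansion, bound the $X$-terms using $\norm{Y}_\infty$ and $\norm{Y'}_\infty\leq\mathcal{A}$, and rearrange to obtain
\begin{equation*}
\abs{b_s}(t-s)\leq 2\delta+c_4\mathcal{A}^{q_4}\bigl(\norm{Y}_\infty(t-s)^\alpha+(t-s)^{2\alpha}+(t-s)^{3\alpha}\bigr).
\end{equation*}
Dividing by $(t-s)$ and optimizing in $t-s\in(0,1]$ (the dominant balance pits $\delta/(t-s)$ against $\norm{Y}_\infty(t-s)^{\alpha-1}$) yields $\norm{b}_\infty\leq c_5\mathcal{A}^{q_5}\delta^{\beta_2}$ for some $\beta_2>0$. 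Combining the two estimates produces the claimed bound with $l=\beta_1\wedge\beta_2$ and $q=q_3\vee q_5$.

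The main obstacle is the polynomial bookkeeping: every invocation of a controlled-path, sewing, or H\"older-norm estimate introduces a new power of $\mathcal{A}$, and all of these must be absorbed into a single $\mathcal{A}^q$. The essential analytic input is the gap condition $2\alpha>\theta$, which is precisely what is needed to make the remainder $\epsilon^{2\alpha}$ in the pointwise inequality strictly smaller than the test direction $L_\theta(X)\epsilon^\theta$ on fine scales, so that the optimization in $\epsilon$ leaves a positive power of $\delta$; without it the remainder dominates on every scale and no extraction is possible.
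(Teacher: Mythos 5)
The paper does not actually prove this proposition; it states ``we refer to \cite{MR3298472} for a proof,'' so there is no in-paper argument to compare against. Your first step, however, matches the strategy in that reference: use the sewing/Gubinelli germ bound $\abs{Z_{s,t}-\langle Y_s,X_{s,t}\rangle-Y'_s\mathbb{X}_{s,t}-b_s(t-s)}\lesssim\mathcal{A}^{q_1}\abs{t-s}^{3\alpha}$, pair with $\phi=Y_s/\abs{Y_s}$, invoke $\theta$-roughness at scale $\epsilon$, and optimize over $\epsilon$; this correctly gives $\norm{Y}_\infty\lesssim\mathcal{A}^{q_3}\delta^{\beta_1}$ (with the usual caveat that $\epsilon\leq 1/2$ forces one to handle the trivial large-$\delta$ regime separately).

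The second step has a genuine gap. In your inequality $\abs{b_s}(t-s)\leq 2\delta+c_4\mathcal{A}^{q_4}\bigl(\norm{Y}_\infty(t-s)^\alpha+(t-s)^{2\alpha}+(t-s)^{3\alpha}\bigr)$, the $(t-s)^{2\alpha}$ term comes from $Y'_s\mathbb{X}_{s,t}$ and carries no factor that shrinks with $\delta$. After dividing by $(t-s)$ it contributes $\mathcal{A}^{q_4}(t-s)^{2\alpha-1}$, and since $\alpha\in(1/3,1/2]$ gives $2\alpha-1\leq 0$, we have $(t-s)^{2\alpha-1}\geq 1$ for every $t-s\in(0,1]$. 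Hence no choice of $t-s$ makes this term small, and the claimed bound $\norm{b}_\infty\leq c_5\mathcal{A}^{q_5}\delta^{\beta_2}$ simply does not follow. The missing idea is an intermediate extraction of $\norm{Y'}_\infty$: apply $\theta$-roughness a second time to the controlled-path decomposition $Y_{s,t}=Y'_sX_{s,t}+R^Y_{s,t}$, using the already established smallness of $\norm{Y}_\infty$, to obtain $\norm{Y'}_\infty\lesssim\mathcal{A}^{q}\norm{Y}_\infty^{\gamma}$ for some $\gamma>0$. Only then does the troublesome term become $\norm{Y'}_\infty(t-s)^{2\alpha-1}$ with a small coefficient, and the optimization in $t-s$ (balancing $\delta/(t-s)$, $\norm{Y}_\infty(t-s)^{\alpha-1}$, $\norm{Y'}_\infty(t-s)^{2\alpha-1}$, and $(t-s)^{3\alpha-1}$, using $3\alpha>1$) yields a positive power of $\delta$. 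Without this extra step your extraction of $\norm{b}_\infty$ does not close.
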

   We aim to prove H\"older roughness for \(X_t\) in this subsection. Let \(v\in\mathbb{R}^d\) with \(\abs{v}=1 \), then \( v^TX_{s,t}  \) is given by
\begin{equation*}
v^TX_{s,t}=\int_{s}^{t} v^T A_i(X_l) dW^i_l+\int_{s}^{t} v^T B(X_l)  dl ,\  \forall t \in [s,1].
\end{equation*}	
We are able to prove the following small ball estimate.
\begin{lemma}
	\label{smallball}
	If \(\{A_i \}_{1\leq i \leq d} \) form an elliptic system, then for any $s\in [0,1)$ and \( k\in(0,1) \), we can find \(\epsilon_0>0 \) and constants $C, C'>0$ such that for all \(0<\epsilon<\epsilon_0 \) and $\delta\in (0,1-s)$ 
	\begin{equation*}
	\mathbb{P}\left(\inf_{\norm{v}= 1} \sup_{t\in[s,s+\delta]}  \abs{v^T X_{s,t}} \leq \epsilon \right)\leq  C\exp\left\{-C'\frac{ \delta}{\epsilon^{2-2k} }\right\}.
	\end{equation*}
\end{lemma}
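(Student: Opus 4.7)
My plan is to reduce to a fixed-direction small-ball estimate, and then perform a union bound over the unit sphere $S^{d-1}$ using an $\eta$-net, controlled by a tail estimate on $\sup_t |X_{s,t}|$. The strict inequality $k>0$ provides just enough room in the exponent to absorb the combinatorial factor from the discretization.

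For a fixed unit vector $v$, I would decompose $v^T X_{s,t}=M_t^v+D_t^v$ with $M_t^v=\int_s^t v^T A(X_l)\,dW^i_l$ a continuous martingale and $D_t^v=\int_s^t v^T B(X_l)\,dl$. Uniform ellipticity gives $\langle M^v\rangle_t=\int_s^t v^T a(X_l)v\,dl\geq \lambda(t-s)$, while $|D_t^v|\leq \|B\|_\infty(t-s)$. Dambis--Dubins--Schwarz represents $M^v$ as a time-changed Brownian motion with time-change $\geq \lambda(t-s)$, so the classical Brownian small-ball bound $\mathbb{P}(\sup_{[0,T]}|\beta_u|\leq r)\leq C_0 e^{-c_0 T/r^2}$ yields $\mathbb{P}(\sup_{[s,s+\delta]}|M_t^v|\leq r)\leq C_0 e^{-c_0\lambda\delta/r^2}$. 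When $\delta\leq \epsilon/(2\|B\|_\infty)$ the drift is at most $\epsilon/2$ and this immediately gives $\mathbb{P}(\sup|v^T X_{s,t}|\leq\epsilon)\leq C_1 e^{-c_1\delta/\epsilon^2}$. For larger $\delta$ I would partition $[s,s+\delta]$ into $N\asymp\delta\|B\|_\infty/\epsilon$ sub-intervals of length $\tau=\epsilon/(2\|B\|_\infty)$, apply the strong Markov property together with the short-interval bound (with $\epsilon$ replaced by $2\epsilon$) on each piece, and multiply. Because $\tau/\epsilon^2\to\infty$ as $\epsilon\to 0$, the multiplicative constant $C_0$ can be absorbed per interval for $\epsilon$ small, yielding the uniform single-$v$ estimate $\mathbb{P}(\sup_{[s,s+\delta]}|v^T X_{s,t}|\leq\epsilon)\leq C_1 e^{-c_1\delta/\epsilon^2}$ for every admissible $\delta$.

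For the union bound, let $\mathcal{N}_\eta\subset S^{d-1}$ be an $\eta$-net with $|\mathcal{N}_\eta|\leq C_d\eta^{-(d-1)}$. Bernstein's inequality for the continuous martingale part of $X_{s,t}$ (whose quadratic variation is dominated by $\Lambda\delta\,I$) combined with the bounded drift gives the Gaussian tail $\mathbb{P}(\sup_{[s,s+\delta]}|X_{s,t}|>R)\leq C_2 e^{-c_2 R^2/\delta}$ for $R\geq 2\|B\|_\infty\delta$. On $\{\sup|X_{s,t}|\leq R\}$, if $\inf_{v}\sup_t|v^T X_{s,t}|\leq\epsilon$, then some $v'\in\mathcal{N}_\eta$ satisfies $\sup_t|v'^T X_{s,t}|\leq \epsilon+\eta R$. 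Choosing $\eta=\epsilon/R$ so that $\epsilon+\eta R=2\epsilon$, a union bound then gives
\[
\mathbb{P}\!\left(\inf_{\|v\|=1}\sup_{t\in[s,s+\delta]}|v^T X_{s,t}|\leq \epsilon\right)\leq C_2 e^{-c_2 R^2/\delta}+C_d (R/\epsilon)^{d-1}\,C_1 e^{-c_1\delta/(4\epsilon^2)}.
\]
Setting $R=\delta\,\epsilon^{-(1-k)}$ makes the first term exactly $C_2\exp(-c_2\delta/\epsilon^{2-2k})$. For the second term, $(R/\epsilon)^{d-1}=(\delta/\epsilon^{2-k})^{d-1}\leq \epsilon^{-(2-k)(d-1)}$, and one verifies that for any $k\in(0,1)$ there is an $\epsilon_0>0$ such that $(d-1)\log(R/\epsilon)\leq \tfrac{c_1}{8}\,\delta/\epsilon^2-c_2\delta/\epsilon^{2-2k}$ whenever $\epsilon\leq \epsilon_0$ and $\delta\geq \epsilon^{2-2k}$; in the complementary regime $\delta<\epsilon^{2-2k}$, the desired bound $C\exp(-C'\delta/\epsilon^{2-2k})$ is bounded below by a positive constant and so holds trivially once $C$ is enlarged.

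The main obstacle is the tightness of the choice of $R$ in the last step: the whole reason the stated exponent is $\epsilon^{2-2k}$ rather than the natural $\epsilon^{2}$ produced by the Brownian small-ball bound is that the room of size $\epsilon^{-2k}$ is precisely what is needed to absorb both the polynomial $(R/\epsilon)^{d-1}$ from the $\eta$-net and the Gaussian tail $\exp(-cR^2/\delta)$. A secondary technical point is Step 2, where the Markov iteration demands that $\tau/\epsilon^2$ be large enough (i.e., $\epsilon$ small enough) to absorb the per-interval prefactor $C_0$ before multiplying $N$ times; this is the source of the restriction $\epsilon<\epsilon_0$ in the statement.
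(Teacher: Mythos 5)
Your proposal is correct, but it follows a genuinely different route from the paper. The paper fixes $v$, writes $M_{s,r}=v^TX_{s,r}$, applies It\^o's formula to $M^2_{s,r}$ and then invokes the exponential inequality for continuous martingales (Revuz--Yor) to produce, in a single step, the estimate $\mathbb{P}\left(\sup_{[s,s+\delta]}|v^TX_{s,t}|\leq\epsilon\right)\leq 2\exp\{-\delta/(C_B^2\epsilon^{2-2k})\}$; the parameter $k$ is introduced already at this stage through a threshold $\delta\epsilon^k$ on the stochastic-integral term and gives a contradiction with the lower bound $\langle M^m\rangle\geq C_1\delta$ forced by ellipticity. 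The union bound over the sphere then uses a cover of size $\sim(\theta/\epsilon)^d$ together with the \emph{stretched}-exponential tail $\mathbb{P}(\sup|X_{s,t}|>\theta)\leq C\exp(-C'\theta^\tau)$, $\tau<\tfrac12$, and chooses $\theta$ as a power of $\delta/\epsilon^{2-2k}$. You instead obtain a \emph{sharper} single-$v$ bound $C_1\exp(-c_1\delta/\epsilon^2)$ via Dambis--Dubins--Schwarz, the classical Brownian small-ball estimate, and a strong-Markov iteration over sub-intervals of length $\sim\epsilon$ (valid since the diffusion is time-homogeneous with uniform ellipticity and bounded coefficients, so the per-interval bound is uniform in the starting point); the constant $C_0^N$ is absorbed exactly because $\tau/\epsilon^2\to\infty$, which produces the $\epsilon<\epsilon_0$ restriction. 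The loss $\epsilon^2\rightsquigarrow\epsilon^{2-2k}$ is then injected only at the covering stage, with a Gaussian tail $\exp(-cR^2/\delta)$ from Bernstein's inequality and the choice $R=\delta\epsilon^{-(1-k)}$. One point where your write-up is arguably more careful than the paper: you explicitly separate the regime $\delta<\epsilon^{2-2k}$ (where the claimed bound is trivially achieved by enlarging $C$) from the regime $\delta\geq\epsilon^{2-2k}$ (where the $\eta$-net cardinality is dominated by the exponential margin); the paper's final two displayed inequalities pass over this case distinction silently, even though the prefactor $\theta^d/\epsilon^d$ is unbounded as $\delta\downarrow\epsilon^{2-2k}$. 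Both approaches are sound; yours trades the slicker single-step It\^o argument for a sharper intermediate estimate and a more explicit bookkeeping of the union bound.
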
	
\begin{proof}
	For a fixed \(v\in\mathbb{R}^d \) and \( s\in[0,1) \) , define \(M_{s,r}(v)= v^TX_{s,r}  \) for $s\leq r$. Then
	\begin{equation*}
	\sup_{t\in[s,s+\delta]}  \abs{v^T X_{s,t}}=\sup_{r\in [s,s+\delta] } \abs{M_{s,r}}.
	\end{equation*}	
	Let $M^m_{s,r}$ be the local martingale part of $M_{s,r}$. By uniform ellipticity, we can find a constant \(C_1>0\) such that
	\begin{equation}
		\label{Ellipticity}
	\langle M^m_{s,r} , M^m_{s,r} \rangle_{s,s+\delta}\geq\delta \inf_{r\in[s,s+\delta]} \sum_{i=1}^{d} \abs{v^T A_i(X_r)} ^2 \geq  \delta C_1, 
	\end{equation}
	where the bracket means the quadratic variation. We can therefore deduce that 
	\begin{equation*}
		\mathbb{P}\left(\sup_{r\in [s,s+\delta] } \abs{M_{s,r}}\leq \epsilon \right)=\mathbb{P}\left(\sup_{r\in [s,s+\delta] } \abs{M_{s,r}}\leq \epsilon,\ \langle M^m_{s,r} , M^m_{s,r} \rangle_{s,s+\delta}\geq  C_1 \delta   \right).
	\end{equation*}
	By It\^o's formula, we have
	\begin{equation}
		\label{Ito}
		M^2_{s,r}=2\sum_{i=1}^{d}\int_{s}^{r}M_{s,l} \cdot v^T A_i(X_l)  dW^i_l+2\int_{s}^{r} M_{s,l}\cdot v^T B(X_l)  dl+\sum_{i=1}^{d} \int_{s}^{r} \abs{v^T A_i(X_l)}^2dl.
	\end{equation}
	Since $\{A_i\}_{1\leq i\leq d}, B\in C^\infty_b$, we can find $C_B>0$ such that
	\begin{equation}
		\label{First implication}
	\left\{\sup_{r\in [s,s+\delta] } \abs{M_{s,r}}\leq \epsilon \right\} \Rightarrow \left\{\sup_{r\in [s,s+\delta] } \abs{\int_{s}^{r} M_{s,l}\cdot v^T B(X_l) dl}  \leq \delta C_B \epsilon  \right\}\ \& \  \left\{ \int_{s}^{s+\delta}M_{s,l}^2 \abs{v^T A_i(X_l)}^2 dl\leq \delta C^2_B \epsilon^2   \right\}.
	\end{equation}	
	By the second term of the last implication we have the following decomposition for $k\in (0,1)$ and $1\leq i\leq d$
	\begin{align}
	\mathbb{P}\left( \sup_{r\in [s,s+\delta] } \abs{M_{s,r}}^2\leq\epsilon^2 \right)=\mathbb{P}\left( \sup_{r\in [s,s+\delta] } \abs{M_{s,r}}^2\leq\epsilon^2, \int_{s}^{s+\delta}M_{s,l}^2 \abs{v^T A_i(X_l)}^2 dl\leq \delta C^2_B \epsilon^2      \right)\nonumber \\
	=\mathbb{P} \left(\sup_{r\in [s,s+\delta] } \abs{M_{s,r}}^2\leq\epsilon^2, \int_{s}^{s+\delta}M_{s,l}^2 \abs{v^T A_i(X_l)}^2 dl\leq \delta C^2_B \epsilon^2 , \sup_{r\in [s,s+\delta] } \abs{\int_{s}^{r}M_{s,l}\cdot  v^T A_i(X_l) dW^i_l} > \delta \epsilon^k   \right)\label{First} \\
	+\mathbb{P}\left( \sup_{r\in [s,s+\delta] } \abs{M_{s,r}}^2\leq\epsilon^2, \int_{s}^{s+\delta}M_{s,l}^2 \abs{v^T A_i(X_l)}^2 dl\leq \delta C^2_B \epsilon^2 , \sup_{r\in [s,s+\delta] } \abs{\int_{s}^{r}M_{s,l}\cdot  v^T A_i(X_l) dW^i_l} \leq\delta \epsilon^k  \right). \label{Second} 
	\end{align}
	For \eqref{First}, we have by the exponential inequality for martingales (see, \cite{revuz2013continuous}, p. 153 ) that
	\begin{align*}
	\mathbb{P}& \left(\sup_{r\in [s,s+\delta] } \abs{M_{s,r}}^2\leq\epsilon^2, \int_{s}^{s+\delta}M_{s,l}^2 \abs{v^T A_i(X_l)}^2 dl\leq \delta C^2_B \epsilon^2 , \sup_{r\in [s,s+\delta] } \abs{\int_{s}^{r}M_{s,l}\cdot  v^T A_i(X_l) dW^i_l} > \delta \epsilon^k   \right) \\
	&\leq \mathbb{P}\left(  \int_{s}^{s+\delta}M_{s,l}^2 \abs{v^T A_i(X_l)}^2 dl\leq \delta C^2_B \epsilon^2 , \sup_{r\in [s,s+\delta] } \abs{\int_{s}^{r}M_{s,l}\cdot  v^T A_i(X_l) dW^i_l} > \delta \epsilon^k  \right) \\
	&\leq 2\exp{-\frac{ \delta}{C^2_B\epsilon^{2-2k} }}.
	\end{align*}	
	Thus, we have
	\begin{align}
	\label{S4part1}
	\mathbb{P}&\left( \sup_{r\in [s,s+\delta] } \abs{M_{s,r}}^2\leq\epsilon^2, \int_{s}^{s+\delta}M_{s,l}^2 \abs{v^T A_i(X_l)}^2 dl\leq \delta C^2_B \epsilon^2 , \sup_{r\in [s,s+\delta] } \abs{\int_{s}^{r}M_{s,l}\cdot  v^T A_i(X_l) dW^i_l} \leq\delta \epsilon^k  \right)\nonumber \\
	&\geq \mathbb{P}\left( \sup_{r\in [s,s+\delta] } \abs{M_{s,r}}^2 \leq\epsilon^2\right)- 2\exp{-\frac{ \delta }{C^2_B\epsilon^{2-2k} }}.
	\end{align}
	On the other hand, by using \eqref{First implication} and \eqref{Ito} we see that
	\begin{align}
	\label{S4part2}
	&\left\{  \sup_{r\in [s,s+\delta] } \abs{M_{s,r}}^2\leq\epsilon^2, \int_{s}^{s+\delta}M_{s,l}^2 \abs{v^T A_i(X_l)}^2 dl\leq \delta C^2_B \epsilon^2 , \sup_{r\in [s,s+\delta] } \abs{\int_{s}^{r}M_{s,l}\cdot  v^T A_i(X_l) dW^i_l} \leq\delta \epsilon^k  \right\}\nonumber \\
	&\Rightarrow\left\{ \sup_{r\in [s,s+\delta] }\abs{M_{s,r}}^2\leq\epsilon^2, \sup_{r\in [s,s+\delta] }  \sum_{i=1}^{d} \int_{s}^{r}  \abs{v^T A_i(X_l)}^2dl\leq\epsilon^2+ 2C_B\delta \epsilon+d\delta \epsilon^k \right\}.
	\end{align}
	From \eqref{S4part1} and \eqref{S4part2} we can deduce that
	\begin{align}
	\mathbb{P}&\left( \sup_{r\in [s,s+\delta] }\abs{M_{s,r}}^2\leq\epsilon^2, \sup_{r\in [s,s+\delta] }  \sum_{i=1}^{d} \int_{s}^{r}  \abs{v^T A_i(X_l)}^2dl\leq\epsilon^2+ 2C_B\delta \epsilon+d\delta \epsilon^k  \right)\nonumber \\
	&\geq \mathbb{P}\left(\sup_{r\in [s,s+\delta] }\abs{M_{s,r}}^2\leq\epsilon^2\right)- 2\exp{-\frac{ \delta }{C^2_B\epsilon^{2-2k} }}.\label{Exponential}
	\end{align}
	By \eqref{Ellipticity}, there exists $\epsilon_1>0$ such that when $\epsilon<\epsilon_1$ we have
	\begin{align}
		\label{optimize}
	\sup_{r\in [s,s+\delta] }  \sum_{i=1}^{d} \int_{s}^{r}  \abs{v^T A_i(X_l)}^2dl= \langle M^m_{s,r} , M^m_{s,r} \rangle_{s,s+\delta}\geq \delta C_1 >\epsilon^2+ 2C_B\delta \epsilon+d\delta \epsilon^k.
	\end{align}
	Combining \eqref{Exponential} and \eqref{optimize} gives
	\begin{align*}
	&\mathbb{P}\left(\sup_{r\in [s,s+\delta] }\abs{M_{s,r}}\leq\epsilon,\ \langle M^m_{s,r} , M^m_{s,r} \rangle_{s,s+\delta}\geq \delta C_1\right)\\
	\leq &\mathbb{P}\left( \sup_{r\in [s,s+\delta] }\abs{M_{s,r}}^2\leq\epsilon^2,\sup_{r\in [s,s+\delta] }  \sum_{i=1}^{d} \int_{s}^{r}  \abs{v^T A_i(X_l)}^2dl>\epsilon^2+ 2C_B\delta \epsilon+d\delta \epsilon^k  \right)\\
	\leq &  2\exp{-\frac{ \delta }{C^2_B\epsilon^{2-2k} }}.
	\end{align*}
	Up to this point, all of our computations are done with \(v \) fixed. We shall conclude with a compactness argument. The idea is similar to that of \cite{MR942019} (page 127). Observe that if \(\sup_{t-s \leq \delta} \abs{X_{s,t}} \) is uniformly bounded, then \( \sup_{t-s \leq \delta}  \abs{v^T X_{s,t}} \) is Lipschitz as a function of \(v \). Moreover, since the unit ball of \(\mathbb{R}^d \) is compact, we can cover it with balls of radius $a<1$ and the number of these ball can be chosen to be less than $C/a^d$ for some constant $C$. 
	So, we have for any $\tau\in(0,\frac{1}{2})$
	\begin{align*}
		\mathbb{P}\left(\inf_{\norm{v}= 1} \sup_{t-s \leq \delta}  \abs{v^T X_{s,t}} \leq \epsilon \right)& \leq \frac{C_2 \theta^d}{\epsilon^d}\sup_{\norm{v}=1} \mathbb{P}\bigg( \sup_{t-s \leq \delta}  \abs{v^T X_{s,t}} \leq 2\epsilon \bigg)+\mathbb{P}\bigg(\sup_{t-s \leq \delta} \abs{X_{s,t}}>\theta \bigg)\\
		& \leq \frac{C_2 \theta^d}{\epsilon^d}\exp{-\frac{ \delta }{C^2_B\epsilon^{2-2k} }}+C_4\exp{-C_5\theta^{\tau}}.
	\end{align*}
	We used the exponential integrability of $\sup_{s\in[0,1]}\abs{X_s}$ in the last inequality (see for example proposition 2.9 of \cite{MR3531675}). Finally, set \[ \theta=\frac{\sigma^{\frac{1}{\tau}}}{\epsilon^{\frac{2-2k}{\tau}}},\]
	then we can find $\epsilon_0\leq \epsilon_1$ such that for $\epsilon<\epsilon_0$
	\begin{align*}
		\mathbb{P}\left(\inf_{\norm{v}= 1} \sup_{t-s \leq \delta}  \abs{v^T X_{s,t}} \leq \epsilon \right)& \leq \frac{C_2 \delta^{\frac{d}{\tau}}}{\epsilon^{\frac{(2-2k+\tau)d}{\tau}}}\exp{-\frac{ \delta}{C^2_B\epsilon^{2-2k} }}+C_4\exp{-C_5\frac{ \delta }{\epsilon^{2-2k} }}\\
		&\leq C_6\exp{-C_7\frac{ \delta }{\epsilon^{2-2k} }}.
	\end{align*}

\end{proof}

\begin{proposition}
	\label{Diffusion Holder}
	Under the assumptions of the previous lemma, for any \(\theta>\frac{1}{2} \), and \(k\in(0,1) \) such that \(\theta>\frac{1}{2-2k} \), we have 
	\begin{equation*}
	\mathbb{P}(L_{\theta}(X)<\epsilon)\leq C_1\exp(-C_2\epsilon^{2k-2}).
	\end{equation*}
In particular, $L^{-1}_\theta(X)\in L^P(\Omega)$, for any \(p\geq 1 \).
\end{proposition}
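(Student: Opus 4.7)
The plan is to combine the small ball estimate of Lemma~\ref{smallball} with a dyadic discretization in both scale and base point. Unfolding the definition, $\{L_\theta(X) < \epsilon\}$ is the event that there exist $s_0 \in [0,1]$, $\epsilon_0 \in (0, 1/2]$, and a unit vector $\phi_0$ with
\[
\sup_{t:\, \epsilon_0/2 < |t-s_0| < \epsilon_0} |\langle \phi_0, X_{s_0,t}\rangle| \leq \epsilon\, \epsilon_0^\theta.
\]
Applying the identity $X_{u,t} = X_{s_0,t} - X_{s_0,u}$ at $u = s_0 + \epsilon_0/2$ (or the symmetric choice if $s_0$ is too close to $1$) and then rounding $\epsilon_0$ down to a dyadic scale $\delta_m = 2^{-m}$ with $m \geq 1$, I reduce, up to a multiplicative constant depending only on $\theta$, to a union over $m$ of events of the form
\[
\Bigl\{\sup_{t \in [u, u+\delta_m]} |\langle \phi, X_{u,t}\rangle| \leq C\epsilon \delta_m^\theta\Bigr\}
\]
for some $u \in [0, 1-\delta_m]$ and some unit vector $\phi \in \mathbb{R}^d$.

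Next I would discretize the base point $u$. Fix $\alpha \in (1/3, 1/2)$; by the exponential integrability of $\|X\|_\alpha$ (Proposition~\ref{Integrability}), $\mathbb{P}(\|X\|_\alpha > K) \leq Ce^{-cK^2}$ for large $K$. On $\{\|X\|_\alpha \leq K\}$, Hölder continuity lets me replace $u$ by the nearest point $u_j^{(m)}$ of a uniform net of spacing $\eta_m$ in $[0, 1-\delta_m]$ at additive cost $K\eta_m^\alpha$. Choosing $\eta_m = (\epsilon\delta_m^\theta / K)^{1/\alpha}$ matches this cost to $\epsilon \delta_m^\theta$, so $\{L_\theta(X) < \epsilon\} \cap \{\|X\|_\alpha \leq K\}$ is contained in the union over $m$ and grid points $u_j^{(m)}$, with cardinality $N_m \lesssim \eta_m^{-1} = (K/(\epsilon \delta_m^\theta))^{1/\alpha}$, of the events
\[
\Bigl\{\inf_{\|v\|=1}\sup_{t \in [u_j^{(m)},\, u_j^{(m)}+\delta_m]} |\langle v, X_{u_j^{(m)},t}\rangle| \leq C'\epsilon \delta_m^\theta\Bigr\}.
\]

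Lemma~\ref{smallball} applied at each grid point with $\delta = \delta_m$ and small ball radius $C'\epsilon\delta_m^\theta$ bounds each such event by
\[
C\exp\!\left(-C_0\, \epsilon^{-(2-2k)}\, \delta_m^{\,1-\theta(2-2k)}\right).
\]
The hypothesis $\theta > 1/(2-2k)$ gives $1-\theta(2-2k) < 0$, so $\delta_m^{1-\theta(2-2k)} = 2^{m(\theta(2-2k)-1)}$ is super-geometrically \emph{increasing} in $m$, bounded below by $2^{\theta(2-2k)-1}$ at $m=1$. Taking the union bound, the polynomial factors $N_m \lesssim K^{1/\alpha}\epsilon^{-1/\alpha}2^{m\theta/\alpha}$ are swallowed by the super-exponential decay, the $m$-sum is dominated by its first term, and we obtain
\[
\mathbb{P}\bigl(L_\theta(X) < \epsilon,\ \|X\|_\alpha \leq K\bigr) \leq C_3 K^{1/\alpha}\epsilon^{-1/\alpha}\exp\bigl(-C_4\, \epsilon^{-(2-2k)}\bigr).
\]
Choosing $K = \epsilon^{-(1-k)}$ so that $\mathbb{P}(\|X\|_\alpha > K) \leq C\exp(-cK^2) \leq C\exp(-c\epsilon^{-(2-2k)})$ and combining yields the announced bound $\mathbb{P}(L_\theta(X) < \epsilon) \leq C_1\exp(-C_2 \epsilon^{2k-2})$. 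The integrability $L_\theta^{-1}(X) \in L^p(\Omega)$ for every $p \geq 1$ follows at once from this sub-Gaussian-type tail by integration.

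The main obstacle is the tight bookkeeping in the interplay between the dyadic scale $\delta_m$, the spatial net spacing $\eta_m$, and the Hölder-norm cutoff $K$: $\eta_m$ must be small enough for the Hölder approximation error to stay below $\epsilon\delta_m^\theta$, which forces $N_m$ to carry large polynomial factors in $1/\epsilon$ and $2^m$ that then must be absorbed into the exponential decay coming from Lemma~\ref{smallball}. The hypothesis $\theta > 1/(2-2k)$ is precisely the condition that makes $\delta_m^{1-\theta(2-2k)}$ grow in $m$, simultaneously ensuring summability in $m$ and leaving enough room to swallow these polynomial union-bound factors.
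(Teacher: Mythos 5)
Your argument is correct but takes a genuinely different route from the paper's. The paper deterministically reduces to dyadic structure via a lemma borrowed from the Hairer--Pillai Norris paper (cited as lemma 3 of \cite{MR3112925}), establishing $L_\theta(X) \geq c_\theta\,\hat D_\theta(X)$ where $\hat D_\theta(X)$ is an infimum over dyadic intervals $I_{l,n}$ with the supremum taken only from the \emph{left endpoint} $s = l/2^n$. This gives exactly $2^n$ base points at level $n$, so a naked union bound and the small-ball estimate of Lemma \ref{smallball} already close the proof, with no continuity argument at all. You instead take a net of arbitrary spacing $\eta_m$ over the base point $u$, control the displacement cost via $\|X\|_\alpha$, and pay for it with a truncation at $\|X\|_\alpha \leq K$ followed by an optimization $K = \epsilon^{-(1-k)}$. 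The two approaches buy different things: yours is self-contained and makes explicit where the exponential integrability of the H\"older norm enters; the paper's is shorter and purely deterministic in its combinatorial reduction, outsourcing the geometry to the cited lemma. Both land on the same tail bound, and both use the hypothesis $\theta > 1/(2-2k)$ only through $\delta_m^{1-\theta(2-2k)}$ growing in $m$ so that the $(m,j)$-sum collapses to its first term. One small bookkeeping point worth flagging in your write-up: your net cardinality $N_m \lesssim \eta_m^{-1}$ also carries an $\epsilon^{-1/\alpha}$ factor, so when you conclude with $\epsilon^{-1/\alpha}\exp(-C_4\epsilon^{-(2-2k)})$ you should note this prefactor is absorbed by shrinking $C_4$ slightly --- which is harmless, but should be said.
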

\begin{proof}
	Let us define
	\begin{equation*}
		D_{\theta}(X):=\inf_{\norm{v}= 1} \inf_{n\geq 1} \inf_{l\leq 2^n}\sup_{s,t\in I_{l,n}}\frac{ \abs{v^T X_{s,t}} }{2^{-n\theta}};
	\end{equation*}
	and
	\begin{equation*}
		\hat{D}_{\theta}(X):=\inf_{\norm{v}= 1} \inf_{n\geq 1} \inf_{l\leq 2^n}\sup_{s=\frac{l}{2^n},t\in I_{l,n}}\frac{ \abs{v^T X_{s,t}} }{2^{-n\theta}},
	\end{equation*}
	where
	\begin{equation*}
		I_{l,n}=[\frac{l}{2^n},\frac{l+1}{2^n} ].
	\end{equation*}
	Obviously we have $D_{\theta}(X)\geq \hat{D}_{\theta}(X)$. The exact same argument of lemma 3 of \cite{MR3112925} can be applied here, from which we can deduce 
	
	\[ L_{\theta}(X)>\frac{1}{2\cdot 8^\theta}D_{\theta}(X)\geq \frac{1}{2\cdot 8^\theta}\hat{D}_{\theta}(X) . \]
	Thus, it suffices to give estimate for $\hat{D}_{\theta}(X)$. By definition, we have
	\begin{equation*}
	\mathbb{P}(\hat{D}_{\theta}(X)<\epsilon)\leq\sum_{n=1}^{\infty}\sum_{k=1}^{2^n-1 }\mathbb{P}(\inf_{\norm{v}= 1} \sup_{s=\frac{l}{2^n},t\in I_{l,n}}\frac{\abs{v^T X_{s,t}} }{2^{-n\theta}}<\epsilon ).
	\end{equation*}
	When $\epsilon$ is sufficiently small, we can apply the previous lemma to get
	\begin{equation*}
	\mathbb{P}(\hat{D}_{\theta}(X)<\epsilon)\leq C_1\sum_{n=1}^{\infty}2^n\exp{-C_2\epsilon^{2k-2}2^{n(\theta(2-2k)-1) }}.
	\end{equation*}
	Since \(\theta>\frac{1}{2-2k} \), we can find \(C_3,C_4>0 \) uniformly over $\epsilon\leq 1,\; n\geq 1$, such that
	\begin{equation*}
	2^n\exp{-C_2\epsilon^{2k-2}2^{n(\theta(2-2k)-1) }}\leq \exp{C_3-C_4n\epsilon^{2k-2} }.
	\end{equation*}
	Thus,
	\begin{equation*}
	\mathbb{P}(\hat{D}_{\theta}(X)<\epsilon)\leq C_1\sum_{n=1}^{\infty}\exp{C_3-C_4n\epsilon^{2k-2} }\leq C_5\exp{-C_6\epsilon^{2k-2} },
	\end{equation*}
	and the proof is finished.
	
\end{proof}

\subsection{Non-degenerate property of Jacobian processes}
Now, we move on to the study of non-degenerate property of the Jacobian process of \(X_t\). For the sake of conciseness, we adopt the notions from \cite{MR2786645} and introduce the following
\begin{definition}
	A family of sets \(\{E_\epsilon \}_{\epsilon\in[0,1]}\in \mathcal{F} \) is said to be ``almost true" if for any \(p\geq 1\), we can find \(C_p>0\) such that
	\begin{equation*}
	\mathbb{P}(E_\epsilon)\geq1- C_{p}\epsilon^p.
	\end{equation*}
	Similarly for ``almost false". Given two
	such families of events A and B, we say that ``A almost implies B" and we write $A\Rightarrow_\epsilon B$ if \(A\setminus B \) is almost false.
\end{definition}
 It is straightforward to check that these ``almost'' implications are transitive and invariant under any reparametrisation of the form $\epsilon\mapsto \epsilon^\alpha$ for $\alpha>0$.
\begin{remark}
	A typical situation where this definition naturally appears is as follows. Suppose that $Z$ is a random variable in some probability space such that $\mathbb{E}\abs{Z}^p<\infty$ for any $p\geq 1$, then  
	\begin{equation*}
		\mathbb{P}\left(\abs{Z}>\frac{1}{\epsilon}\right)\leq \mathbb{E}\abs{Z}^p\epsilon^{p}.
	\end{equation*}
	In other words, $\left\{ \abs{Z}\leq \frac{1}\epsilon \right\}$ is almost true. We will simply write it as $\abs{Z}\leq_\epsilon \frac{1}\epsilon$.
\end{remark}
Our next result establishes a non-degenerate property of Jacobian process \(J^X\).

\begin{proposition}
	\label{Jacobian Roughness}
Let \(f(s)\in\mathbb{R}^d\) be a diffusion controlled by $J^X$, such that for any $\gamma\in (0,\frac{1}{2})$ we have $\norm{f}_\gamma\in L^p(\Omega)$ for any $p\geq 1$. Then
\begin{equation*}
	\left\{ \sup_{r\in[0,t]}  \abs{\int_{0}^{r}f(s)^Td\boldsymbol{J^X_{s\leftarrow 0}}}    \leq \epsilon  \right\}\Rightarrow_\epsilon \left\{ \sup_{r\in[0,t]}\abs{f(r)}\leq C\epsilon^{\alpha}  \right\},
\end{equation*}
for some constant $C>0$ and $\alpha\in (0,1)$.

\end{proposition}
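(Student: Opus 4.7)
My plan is to apply a Norris-type argument to the rough integral $Z_r := \int_0^r f(s)^T d\boldsymbol{J^X_{s\leftarrow 0}}$. I would follow the random-variable version of Proposition \ref{Norris}: first establish a deterministic Norris-type inequality bounding $\norm{f}_\infty$ by $\norm{Z}_\infty$ up to a polynomial factor in a random quantity $\mathcal{A}$; then upgrade to the desired almost implication using the $L^p$-integrability of $\mathcal{A}$.

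The central technical step is to prove a $\theta$-H\"older roughness estimate for $J^X$ in the directions relevant to the integrand $f^T dJ^X$. Starting from the SDE
\[
dJ^X_{t\leftarrow 0}=\sum_{i=1}^d DA_i(X_t) J^X_{t\leftarrow 0}dW^i_t+DB(X_t) J^X_{t\leftarrow 0}dt,
\]
the local martingale part of any projection $v^T J^X_{s,t}$ has quadratic variation involving $\sum_i \abs{v^T DA_i(X_l) J^X_{l\leftarrow 0}}^2$. Using the elementary matrix inequality
\[
\sum_i v^T DA_i(X_l) J^X_{l\leftarrow 0} (J^X_{l\leftarrow 0})^T DA_i(X_l)^T v \geq \sigma_{\min}(J^X_{l\leftarrow 0})^2 \sum_i \abs{v^T DA_i(X_l)}^2,
\]
together with Assumption \ref{Assumption 3} and the $L^p$-integrability of $J^X_{0\leftarrow \cdot}$ (hence of $\sigma_{\min}(J^X)^{-1}$) from Proposition \ref{Integrability}, one gets a random but $L^p$-controlled lower bound on the quadratic variation over $[s,s+\delta]$. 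The small-ball argument of Lemma \ref{smallball} then goes through with essentially the same exponential decay (at the cost of prefactors with moments of all orders), and the dyadic subdivision argument of Proposition \ref{Diffusion Holder} yields $L_\theta^{-1}(J^X)\in L^p(\Omega)$ for every $p\geq 1$.

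With roughness in place, applying Proposition \ref{Norris} to $Z_r$ gives the deterministic estimate $\norm{f}_\infty \leq M\mathcal{A}^q\norm{Z}_\infty^l$, where $\mathcal{A}$ collects $1 + L_\theta^{-1}(J^X) + \rho_\alpha(\boldsymbol{J^X}) + \norm{f}_\alpha + \norm{f'}_\alpha + \norm{b}_\alpha$, $b$ being the drift from the SDE representation of $f$. By the integrability hypothesis on $f$ (which also passes to its Gubinelli derivative $f'$ and to $b$), Proposition \ref{Integrability}, and the roughness step, every term in $\mathcal{A}$ belongs to $L^p$ for all $p$. Hence $\mathcal{A}\leq_\epsilon \epsilon^{-\nu}$ for any $\nu>0$, and on the intersection $\{\norm{Z}_\infty\leq\epsilon\}\cap\{\mathcal{A}\leq \epsilon^{-\nu}\}$ one deduces $\norm{f}_\infty\leq M\epsilon^{l-q\nu}$. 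Choosing $\nu$ small enough that $\alpha := l-q\nu > 0$ yields the conclusion, the exceptional event $\{\mathcal{A}>\epsilon^{-\nu}\}$ being almost false by Chebyshev.

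The principal obstacle I anticipate is the roughness step itself. The SDE for $J^X$ has a random coefficient and only $d$-dimensional Brownian noise drives a $d\times d$ matrix process, so one cannot hope for full $\theta$-roughness of $J^X$ as a path in $\mathbb{R}^{d^2}$ (the relevant Gram matrix $\sum_i \operatorname{vec}(DA_i)\operatorname{vec}(DA_i)^T$ has rank at most $d$). The argument must therefore identify the correct projections along which roughness is simultaneously achievable via Assumption \ref{Assumption 3} and sufficient to drive the Norris-type estimate for the specific integrand $f^T dJ^X$ in the statement.
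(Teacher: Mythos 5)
Your reduction to a Norris-type estimate is the right instinct, but the central step of your plan --- establishing $\theta$-H\"older roughness of $J^X$ so that the deterministic Proposition \ref{Norris} can be applied with $J^X$ as the driving rough path --- is precisely the step that fails, and you do not repair it. The roughness hypothesis in Proposition \ref{Norris} is a lower bound on oscillation in \emph{every} unit direction of the driver's state space, here $\mathbb{R}^{d\times d}$. For a fixed matrix direction $\Phi$, the scalar $\langle \Phi, J^X_{s,t}\rangle$ has martingale part with instantaneous quadratic variation $\sum_{i=1}^d\langle\Phi, DA_i(X_l)J^X_{l\leftarrow 0}\rangle^2$, and the linear map $\Phi\mapsto\bigl(\langle\Phi, DA_i(X_l) J^X_{l\leftarrow 0}\rangle\bigr)_{1\le i\le d}$ has rank at most $d<d^2$; for directions in its kernel the small-ball argument of Lemma \ref{smallball} gives nothing. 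Even restricting to the rank-one directions $vw^T$ that actually matter for the integrand $f^T\,dJ^X$, the quadratic variation density is $\sum_i\bigl(v^T DA_i(X_l)\,J^X_{l\leftarrow 0}w\bigr)^2=(J^X_{l\leftarrow 0}w)^T\bigl(\sum_i u_iu_i^T\bigr)(J^X_{l\leftarrow 0}w)$ with $u_i=(v^TDA_i(X_l))^T$, and Assumption \ref{Assumption 3} only bounds the trace $\sum_i\abs{u_i}^2$ from below, not the quadratic form evaluated in the particular direction $J^X_{l\leftarrow 0}w$. Your displayed inequality controls $\sum_{i}\abs{v^TDA_iJ^X}^2$, i.e.\ the sum over \emph{all} columns, which is strictly weaker than what directional roughness requires. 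Hence the assertions that ``the small-ball argument goes through'' and that $L_\theta^{-1}(J^X)\in L^p(\Omega)$ are unsupported, and your closing paragraph concedes exactly this obstruction without supplying the missing argument; as written, the proof is incomplete at its key point.

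For comparison, the paper sidesteps roughness of $J^X$ entirely: since rough integrals against $\boldsymbol{J^X_{s\leftarrow 0}}$ coincide with Stratonovich integrals, $M_r=\int_0^rf(s)^T\circ dJ^X_{s\leftarrow0}$ is a semimartingale whose martingale integrands are the full row vectors $f(s)^TDA_i(X_s)J^X_{s\leftarrow0}$, and the classical probabilistic Norris lemma (lemma 4.11 of \cite{MR2838095}), which needs only moment bounds and no roughness of the driver, shows that smallness of $M$ almost implies smallness of all these integrands simultaneously. One then writes $\sum_i\abs{f^TDA_iJ^X_{s\leftarrow0}}^2\geq_\epsilon \epsilon^{2\eta}\sum_i\abs{f^TDA_i}^2\geq \epsilon^{2\eta}C_J\abs{f}^2$, using the almost-true lower bound on the smallest singular value of $J^X_{s\leftarrow 0}$ coming from Proposition \ref{Integrability} together with Assumption \ref{Assumption 3}; because the whole vector $f^TDA_iJ^X_{s\leftarrow 0}$ is controlled rather than a single scalar projection, the degenerate-direction problem you ran into never arises. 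To salvage your route you would need a bespoke, direction- and time-dependent roughness statement for $J^X$ and a corresponding reworking of the proof of Proposition \ref{Norris}, which is a substantially harder undertaking than the semimartingale argument.
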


\begin{proof}
	Since $J^X$ is a semi-martingale, we can consider
	\begin{equation*}
		M_t:=\int_{0}^{t}f(s)^T\circ dJ^X_{s\leftarrow 0}. 
	\end{equation*}
	One has the following representation (recall $f^m$ means the local martingale part of $f$).
	\begin{equation*}
		M_r=\sum_{i=1}^{d} \int_{0}^{r} f(s)^T \cdot DA_i(X_s)J^X_{s\leftarrow 0}dW^i_s+\int_{0}^{r}f(s)^T \cdot DB(X_s)J^X_{s\leftarrow 0} ds+\frac{1}{2}\langle f^m , M^m \rangle_{[0,r]}.
	\end{equation*}
	By our assumptions and proposition \ref{Integrability}, it is easy to see $M_r$ verifies the assumption of lemma 4.11 of \cite{MR2838095}, from which we have
	\begin{equation}
		\label{Classical Norris' lemma}
		\left\{\sup_{r\in[0,t]}\abs{M_r}\leq \epsilon \right\}\Rightarrow_\epsilon \left\{\max_{1\leq i\leq d}\sup_{r\in[0,t]} \abs{f(s)^T \cdot DA_i(X_s)J^X_{s\leftarrow 0}}<\epsilon^{\alpha'} \right\}.
	\end{equation}
for some $\alpha'\in (0,1)$. Moreover, since
\begin{equation*}
	\abs{J^X_{0 \leftarrow s}}=\left\{ \inf_{\abs{v}=1}\abs{J^X_{s \leftarrow 0}\cdot v}   \right\}^{-1},
\end{equation*}
we have, by proposition \ref{Integrability}, for any \(\eta>0 \) that
\begin{equation*}
	\mathbb{P}\left( \inf_{s\in[0,1]}\abs{v^T \cdot J^X_{s\leftarrow 0}}<\epsilon^\eta \abs{v}\bigg)=\mathbb{P}\bigg(\norm{\Phi}_\infty>\frac{1}{\epsilon^\eta}  \right)\leq C_p\epsilon^{\eta p}.
\end{equation*}
Thus, for \(s\in [0, t] \) we have 
\begin{equation}
	\label{Almost sure of eta}	
	\sum_{i=1}^{d}\abs{ f(s)^T\cdot DA_i(X_s)J^X_{s\leftarrow 0}}^2\geq_\epsilon \epsilon^{2\eta}\sum_{i=1}^{d}\abs{ f(s)\cdot DA_i(X_s)}^2 .
\end{equation}
Finally, \textbf{\emph{Assumption}} \ref{Assumption 3} gives
\begin{equation}
	\label{Where Assumption 3 is used}
	\sum_{i=1}^{d}\abs{f(s)^T \cdot DA_i(X_s)}^2\geq C_J \abs{f(s)}^2.
\end{equation}
Combining \eqref{Classical Norris' lemma} \eqref{Almost sure of eta} \eqref{Where Assumption 3 is used} and use the fact that $\eta$ is arbitrary, we deduce
\begin{equation*}
	\left\{ \sup_{r\in[0,t]}  \abs{\int_{0}^{r}f(s)^Td\boldsymbol{J^X_{s\leftarrow 0}}}   \leq \epsilon  \right\}\Rightarrow_\epsilon \left\{ \sup_{r\in[0,t]}\abs{f(r)}\leq C\epsilon^{\alpha}  \right\}
\end{equation*}
for some $\alpha \in (0,1)$.

\end{proof}

With the previous proposition, we can show the desired non-degenerate property of $D_rX_s$ as a family of processes index by $r$. 
\begin{proposition}
	\label{last step}
	Let \(f(s)\in\mathbb{R}^{d} \) be a diffusion controlled by $D_rX$ for any $r\in [0,1]$ and that for any $\gamma\in (0,\frac{1}{2})$, we have $\norm{f}_\gamma\in L^p(\Omega)$ for all $p\geq 1$. Then for any $t\in[0,1]$ we have
	\begin{equation*}
		\left\{\sup_{r\in[0,t]}\abs{  \int_{0}^{t}f(s)^Td\boldsymbol{D}_r\boldsymbol{X}_s}\leq\epsilon \right\}\Rightarrow_\epsilon \sup_{s\in[0,t]}\abs{f(s)}\leq \epsilon^\alpha.
	\end{equation*}
\end{proposition}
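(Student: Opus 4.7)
The plan is to reduce the claim to Proposition \ref{Jacobian Roughness} via the factorisation
\begin{equation*}
D_rX_s \;=\; J^X_{s\leftarrow r}A(X_r) \;=\; J^X_{s\leftarrow 0}\cdot J^X_{0\leftarrow r}A(X_r),\qquad s\geq r.
\end{equation*}
Writing $M(r):=J^X_{0\leftarrow r}A(X_r)$, which is constant in $s$, and noting that for fixed $r$ the rough path $\boldsymbol{D}_r\boldsymbol{X}$ in $s$ has zero increments on $[0,r)$, one obtains the key identity
\begin{equation*}
\int_0^t f(s)^T\,d\boldsymbol{D}_r\boldsymbol{X}_s \;=\; \left[\int_r^t f(s)^T\,d\boldsymbol{J^X_{s\leftarrow 0}}\right]\cdot M(r).
\end{equation*}

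Ellipticity of $A=\sqrt{a}$ together with Proposition \ref{Integrability} ensures that $M(r)$ is invertible with $Z:=\sup_{r\in[0,t]}\abs{M(r)^{-1}}\in L^p(\Omega)$ for all $p\geq 1$, so $Z\leq_\epsilon \epsilon^{-\delta}$ for any prescribed $\delta>0$. On the hypothesis event one therefore has
\begin{equation*}
\sup_{r\in[0,t]}\abs{\int_r^t f(s)^T\,d\boldsymbol{J^X_{s\leftarrow 0}}}\;\leq_\epsilon\; \epsilon^{1-\delta}.
\end{equation*}
Specialising to $r=0$, where $M(0)=A(x_0)$ is a deterministic invertible matrix, the hypothesis also gives $\abs{\int_0^t f(s)^T\,d\boldsymbol{J^X_{s\leftarrow 0}}}\leq C_0\epsilon$. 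Splitting $\int_0^r = \int_0^t - \int_r^t$ and combining the two bounds yields
\begin{equation*}
\sup_{r\in[0,t]}\abs{\int_0^r f(s)^T\,d\boldsymbol{J^X_{s\leftarrow 0}}}\;\leq_\epsilon\; \epsilon^{1-\delta}.
\end{equation*}

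Finally, since $f$ is controlled by $D_0X = J^X_{\cdot\leftarrow 0}\,A(x_0)$, it is also controlled by $J^X$ (the Gubinelli derivative differs only by the constant invertible matrix $A(x_0)$), so Proposition \ref{Jacobian Roughness} applies with $\epsilon$ replaced by $\epsilon^{1-\delta}$, giving $\sup_{s\in[0,t]}\abs{f(s)}\leq_\epsilon C\epsilon^{(1-\delta)\alpha'}$ for the exponent $\alpha'\in(0,1)$ supplied by that proposition. Choosing $\delta$ small and absorbing the constant $C$ by a slight decrease of the exponent produces the claim with some $\alpha>0$.

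The main technical obstacle is the rigorous justification of the initial factorisation of the rough integral at the boundary $s=r$, where $D_rX$ begins its evolution. Since $D_rX$ evolves continuously from $A(X_r)$ at $s=r$, $f$ is continuous, and $J^X_{0\leftarrow r}A(X_r)$ is genuinely constant in $s$, the increments of $\boldsymbol{D}_r\boldsymbol{X}$ on $[0,r)$ produce no contribution to the rough integral and the factorisation reduces to the composition identity $J^X_{s\leftarrow r}=J^X_{s\leftarrow 0}\cdot J^X_{0\leftarrow r}$; once this is settled, the remainder is a routine combination of moment control on $M(r)^{-1}$ with Proposition \ref{Jacobian Roughness}.
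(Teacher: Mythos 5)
Your proposal is correct and follows essentially the same route as the paper: factor the integral as $\bigl[\int_r^t f(s)^T d\boldsymbol{J^X_{s\leftarrow 0}}\bigr]\cdot J^X_{0\leftarrow r}A(X_r)$, use uniform ellipticity of $A$ together with the $L^p$-integrability of $\Phi$ (Proposition \ref{Integrability}) to remove that factor at the cost of an arbitrarily small power $\epsilon^{-\delta}$ in the almost-true sense, pass from $\int_r^t$ to $\int_0^r$ by subtraction, and invoke Proposition \ref{Jacobian Roughness} with the reparametrised threshold, letting $\delta$ be arbitrary. The only cosmetic differences are that you bundle $J^X_{0\leftarrow r}A(X_r)$ into a single matrix $M(r)$ and that you make explicit the (correct) remark that control by $D_0X=J^X_{\cdot\leftarrow 0}A(x_0)$ yields control by $J^X$, which the paper leaves implicit.
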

\begin{proof}
	As before, we can write
	\begin{equation*}
		\int_{0}^{t}f(s)^T\boldsymbol{D}_r\boldsymbol{X}_s=\int_{r}^{t}f(s)^T\circ d J^X_{s\leftarrow 0} \cdot J^X_{0 \leftarrow r}A(X_r).
	\end{equation*}
	Since $A$ is an elliptic system, we see immediately
	\begin{equation}
		\label{Part 1 of last step}
		\abs{\int_{r}^{t}f(s)^T\circ d J^X_{s\leftarrow 0} \cdot J^X_{0 \leftarrow r}A(X_r)}\geq C_1 \abs{\int_{r}^{t}f(s)^T\circ d J^X_{s\leftarrow 0} \cdot J^X_{0 \leftarrow r}}.
	\end{equation}
	Moreover, we have
	\begin{equation*}
		\abs{J^X_{0 \leftarrow r}}=\left\{ \inf_{\abs{v}=1}\abs{J^X_{r \leftarrow 0}\cdot v}   \right\}^{-1},
	\end{equation*}
	which implies that for any $\beta>0$ 
	\begin{equation*}
		\mathbb{P}\left( \inf_{\abs{v}=1}\abs{J^X_{r \leftarrow 0}\cdot v}\leq \epsilon^\beta  \right)\leq \mathbb{P}(\norm{\Phi}_\infty \geq \frac{1}{\epsilon^\beta} )\leq C_p \epsilon^p.
	\end{equation*}
	By taking transpose, we have that
	\begin{equation}
		\label{Part 2 of last step}
		\abs{\int_{r}^{t}f(s)^T\circ d J^X_{s\leftarrow 0} \cdot J^X_{0 \leftarrow r}}\geq_\epsilon \epsilon^\beta \abs{\int_{r}^{t}f(s)^T\circ d J^X_{s\leftarrow 0} }.
	\end{equation}
	Combining \eqref{Part 1 of last step} and \eqref{Part 2 of last step} gives
	\begin{equation*}
		\abs{\int_{0}^{t}f(s)^T\boldsymbol{D}_r\boldsymbol{X}_s}\geq_\epsilon C_1 \epsilon^\beta \abs{\int_{r}^{t}f(s)^T\circ d J^X_{s\leftarrow 0}}.
	\end{equation*}
	As a result, we have for any $p\geq 1$ that
	\begin{equation*}
		\left\{ \sup_{r\in[0,t]} \abs{\int_{0}^{t}f(s)^T\boldsymbol{D}_r\boldsymbol{X}_s}\leq \epsilon \right\}\Rightarrow_\epsilon \left\{ \sup_{r\in[0,t]} \abs{\int_{r}^{t}f(s)^T\circ d J^X_{s\leftarrow 0}} \leq C_1 \epsilon^{1-\beta}  \right\}\Rightarrow \left\{\sup_{r\in[0,t]} \abs{\int_{0}^{r}f(s)^T\circ d J^X_{s\leftarrow 0}} \leq C_2 \epsilon^{1-\beta}  \right\}.
	\end{equation*}
	From proposition \ref{Jacobian Roughness}, we deduce
	\begin{equation*}
		\left\{ \sup_{r\in[0,t]} \abs{\int_{0}^{r}f(s)^T\circ d J^X_{s\leftarrow 0}} \leq C_2 \epsilon^{1-\beta}   \right\}\Rightarrow_\epsilon \left\{  \sup_{r\in[0,t]}\abs{f(r)}\leq C_3\epsilon^{(1-\beta)\alpha'} \right\}
	\end{equation*}
	for some $\alpha'\in (0,1)$. Since $\beta$ is arbitrary and almost true implications are transitive, our result follows.

\end{proof}

We prepare another lemma for next section.
\begin{lemma}
	\label{Holder regularity}
	If \(f(s)\in\mathbb{R}^{d\times d} \) is a diffusion process controlled by \(D_rX_s\) such that 
	\begin{equation*}
		\norm{f}_\infty,\ \norm{f^M}_\infty\in L^p(\Omega),\ \forall p\geq 1.
	\end{equation*}
	Then
	\begin{equation*}
		F(r)=\int_{0}^{t}f(s)d\boldsymbol{D}_r\boldsymbol{X}_s
	\end{equation*}
	is \(\gamma \)-H\"older continuous for any \(\gamma<\frac{1}{2} \) and \(\norm{F}_\gamma\in L^p(\Omega) \) for all \(p\geq 1 \). 
\end{lemma}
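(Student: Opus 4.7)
The plan is to reduce the H\"older regularity of $F$ in $r$ to that of three standard factors. Using the identity $D_rX_s = J^X_{s\leftarrow 0}\cdot J^X_{0\leftarrow r}\cdot A(X_r)$ for $s\geq r$ and the fact that the last two factors are constant in the integration variable $s$, one can factorise
\begin{equation*}
F(r) = G(r)\cdot J^X_{0\leftarrow r}\cdot A(X_r), \qquad G(r):=\int_r^t f(s)\,d\boldsymbol{J^X_{s\leftarrow 0}}.
\end{equation*}
Since $f$ and $J^X_{\cdot\leftarrow 0}$ are both semimartingales, the rough integral defining $G$ coincides with a Stratonovich integral, so the whole argument can be carried out with classical stochastic calculus.

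First I would establish H\"older regularity of $G$. For $r<r'$ in $[0,t]$, write $G(r)-G(r')=\int_r^{r'} f(s)\circ dJ^X_{s\leftarrow 0}$ and split this Stratonovich integral into its It\^o local-martingale part, a bounded-variation drift part, and the cross-bracket correction $\frac12\langle f^M,(J^X)^m\rangle_{[r,r']}$. The drift term is bounded by $|r'-r|\cdot\norm{f}_\infty$ times bounded coefficients, while the bracket term is the place where the assumption $\norm{f^M}_\infty\in L^p$ enters, giving an $|r'-r|^1$ contribution with all moments. For the martingale term, the BDG inequality together with the uniform boundedness of the $DA_i$ and the assumption $\norm{f}_\infty\in L^p(\Omega)$ yields, for every $p\geq 1$,
\begin{equation*}
\mathbb{E}\bigg|\int_r^{r'} f(s)\,dJ^{X,m}_{s\leftarrow 0}\bigg|^p \leq C_p |r'-r|^{p/2}.
\end{equation*}
The Kolmogorov continuity theorem then gives that $G$ is $\gamma$-H\"older continuous on $[0,t]$ for every $\gamma<1/2$ with $\norm{G}_\gamma\in L^p(\Omega)$ for all $p\geq 1$.

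Next, Proposition \ref{Integrability} provides the same H\"older regularity with full moments for $r\mapsto J^X_{0\leftarrow r}$, while smoothness and boundedness of $A$ combined with the H\"older regularity of $X$ give the analogous statement for $r\mapsto A(X_r)$. Writing the standard telescoping difference
\begin{equation*}
F(r)-F(r')=(G(r)-G(r'))J^X_{0\leftarrow r}A(X_r)+G(r')(J^X_{0\leftarrow r}-J^X_{0\leftarrow r'})A(X_r)+G(r')J^X_{0\leftarrow r'}(A(X_r)-A(X_{r'})),
\end{equation*}
and applying H\"older's inequality factor by factor together with the bounds above yields $\norm{F}_\gamma\in L^p(\Omega)$ for every $p\geq 1$ and every $\gamma<1/2$. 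The main obstacle is to carry out the BDG/Stratonovich bookkeeping cleanly so that the two separate assumptions on $\norm{f}_\infty$ and $\norm{f^M}_\infty$ are used at the correct places; once the factorisation $F=G\cdot J^X_{0\leftarrow\cdot}\cdot A(X_\cdot)$ is in hand, the remaining steps are routine Kolmogorov-type estimates.
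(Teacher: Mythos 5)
Your proof is correct and takes essentially the same route as the paper: you use the factorisation $D_rX_s = J^X_{s\leftarrow 0}J^X_{0\leftarrow r}A(X_r)$ to pull the $r$-dependence out of the Stratonovich integral, bound the increments of $G(r)=\int_r^t f(s)\circ dJ^X_{s\leftarrow 0}$ via BDG, and finish with Kolmogorov. The paper's $\Gamma_1,\Gamma_2$ decomposition is precisely the two-term telescope of the product $G(r)\cdot\big(J^X_{0\leftarrow r}A(X_r)\big)$, so your three-factor telescope is only a cosmetic refinement of the same argument.
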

\begin{proof}
	We have
	\begin{equation*}
		F(r)=\int_{0}^{t}f(s)d\boldsymbol{D}_r\boldsymbol{X}_s=\int_{0}^{t}f(s)\circ dD_rX_s=\int_{r}^{t}f(s)\circ dD_rX_s\ a.s.
	\end{equation*}
	where we used the property that $D_rX_s=0$ a.s. for $s<r$.
	As a result, for $0\leq u\leq v\leq t$ we have
	\begin{equation*}
		F(u)-F(v)=\int_{v}^{t}f(s)\circ d(D_uX_s-D_vX_s)+\int_{u}^{v}f(s)\circ dD_uX_s.
	\end{equation*} 
	The first term on the right hand side can be written as
	\begin{equation*}
		\Gamma_1=\int_{v}^{t}f(s)\circ dJ^X_{s\leftarrow 0} \cdot (J^X_{0\leftarrow u}A(X_u)-J^X_{0\leftarrow v}A(X_v) ).
	\end{equation*}
	Similarly, We can write the second term as
	\begin{equation*}
		\Gamma_2=\int_{u}^{v}f(s)\circ dJ^X_{s\leftarrow 0} \cdot J^X_{0\leftarrow u}A(X_u).
	\end{equation*}
	For $\Gamma_1$ we have for any $\alpha\in (0,\frac{1}{2})$ and $p\geq 1$
	\begin{align*}
		\mathbb{E}\abs{\Gamma_1}^p&\leq 2^p \cdot 
		\mathbb{E}\left\{   \sup_{t\in[0,1]}\abs{\int_{0}^{t}f(s)\circ dJ^X_{s\leftarrow 0}}^p\left(C_B \norm{\Phi}_\alpha (u-v)^\alpha+\norm{\Phi}_\infty C_B (v-u)^\alpha \right)^p\right\}\\
		&\leq 2^{2p-1} \cdot  \mathbb{E}\left\{ \sup_{t\in[0,1]}\abs{\int_{0}^{t}f(s)\circ dJ^X_{s\leftarrow 0}}^p (C_B+\norm{\Phi}_\infty+\norm{\Phi}_\alpha)^p        \right\} \abs{v-u}^{\alpha p} .
	\end{align*}
	For $\Gamma_2$ we have for any $p\geq 1$
	\begin{align*}
		\mathbb{E}\abs{\Gamma_2}^p\leq \mathbb{E}\left\{C_B^p \norm{\Phi}^p_\infty \abs{\int_{u}^{v}f(s)\circ dJ^X_{s\leftarrow 0}}^p         \right\}
		\leq C_B^p\mathbb{E}(\norm{\Phi}^{2p}_\infty)^{\frac{1}{2}}\mathbb{E}\left\{\abs{\int_{u}^{v}f(s)\circ dJ^X_{s\leftarrow 0}}^{2p}       \right\}^{\frac{1}{2}}.
	\end{align*}
	By Burkholder-Davis-Gundy inequality, standard computation gives that for any $q\geq 2$
	\begin{align*}
		\mathbb{E}\left\{\abs{\int_{u}^{v}f(s)\circ dJ^X_{s\leftarrow 0}}^{q}       \right\}&\leq\mathbb{E}\left\{ \abs{\int_{u}^{v}f(s) d(J^X_{s\leftarrow 0})^m+\int_{u}^{v}f(s)d(J^X_{s\leftarrow 0})^b+\frac{1}{2}\langle f^m, (J^X_{s\leftarrow 0})^m \rangle_{u,v} }^q          \right\}\\
		&\leq C_q \mathbb{E}\left\{  \norm{f}_\infty^q\norm{\Phi}_{\infty}^qC_B^q\abs{v-u}^{\frac{q}{2}}+\norm{f}_\infty^q\norm{\Phi}_{\infty}^qC_B^q\abs{v-u}^{q}\right\}\\
		&+C'_q(\mathbb{E}\norm{f^m}_\infty^{2q})^{\frac{1}{2}}(\mathbb{E}\norm{\Phi}_{\infty}^{2q}C_B^{2q})^{\frac{1}{2}}\abs{v-u}^{\frac{q}{2}}\\
		&\leq P_q( \norm{f}_\infty, \norm{f^m}_\infty, \norm{\Phi}_{\infty}, C_B,) \abs{v-u}^{\alpha q},
	\end{align*}
	where $P_q$ is some polynomial depends on $q$. It is also relative easy to check that, under our assumptions
	\[ \sup_{t\in[0,1]}\abs{\int_{0}^{t}f(s)\circ dJ^X_{s\leftarrow 0}}\in L^p(\Omega),\ \ \forall p\geq 1.          \]
	Hence, combining our estimate for $\Gamma_1$ and $\Gamma_2$ gives
	\begin{equation*}
		\mathbb{E}\abs{F(v)-F(u)}^p\leq 2^{p-1}(\mathbb{E}\abs{\Gamma_1}^p+\mathbb{E}\abs{\Gamma_2}^p )\leq C_p \abs{v-u}^{\alpha p}.
	\end{equation*}
	We conclude with Kolmogorov continuity theorem and Besov–H\"older embedding (see theorem A.10 of \cite{friz2010multidimensional}).
\end{proof}



\section{Existence of smooth density and Gaussian type upper bound}

\subsection{Malliavin smoothness and integrability}

Let \( X_t,Y_t \) be the processes defined in theorem \ref{main1}. We need to show $Y_t\in \mathbb{D}^\infty$ before we can apply proposition \ref{Smoothness}. In general, it is not an easy task to show the Malliavin smoothness of solution to a rough differential equation (see \cite{MR3298472}, proposition 7.5 and the note after). This result has been obtained for Gaussian rough paths in \cite{MR3229800}, but the technique used there is very difficult to generalize to non-Gaussian rough paths. It would be interesting to investigate further in this direction. 

For our purpose, however, we can again use the fact that rough integrals against $\boldsymbol{X_t}$ coincide with Stratonovich integrals against $X_t$ to our advantage. Indeed, the couple process \( (X_t,Y_t ) \) is solution to a stochastic differential equation driven by Brownian motion. Since the vector fields \( \{A_i\}_{1\leq i\leq d},B,\{V_i \}_{0\leq i\leq d}\in C^\infty_b(\mathbb{R}^d)  \), we immediately have $(X_t,Y_t)\in \mathbb{D}^\infty$. The Jacobian process of \((X_t,Y_t) \), is given by 
\begin{equation*}
J^{X,Y}_{t\leftarrow 0}=\begin{pmatrix}
\frac{\partial X_t}{\partial X_0} & \frac{\partial X_t}{\partial Y_0}\\[6pt]
\frac{\partial Y_t}{\partial X_0} & \frac{\partial Y_t}{\partial Y_0}
\end{pmatrix}=\begin{pmatrix}
J^X_{t\leftarrow 0} & 0\\[6pt]
\frac{\partial Y_t}{\partial X_0}  &J^Y_{t\leftarrow  0}
\end{pmatrix}
\end{equation*} 
with inverse 
\begin{equation*}
J^{X,Y}_{0\leftarrow t}=\begin{pmatrix}
J^X_{0\leftarrow t} & 0\\[6pt]
-J^Y_{0\leftarrow  t} \frac{\partial Y_t}{\partial X_0}J^X_{0\leftarrow t}  &J^Y_{0\leftarrow  t}
\end{pmatrix}.
\end{equation*}
Define
\begin{equation*}
	U_t=(X_t,Y_t, J^{X,Y}_{t\leftarrow 0},J^{X,Y}_{0\leftarrow t} ), 
\end{equation*}
 then by proposition \ref{Integrability}, we know for any \(0< \gamma<\frac{1}{2} \) the \(\gamma \)-H\"older constant \(\norm{U_t}_\gamma\in L^p(\Omega) \) for all \(p\geq1 \). Since \(Y_t\) is solution to a rough differential equation driven by \(X_t\) with \(C^\infty_b\) vector fields, \(Y_t\) is automatically a rough path controlled by \(X_t\). Along the same lines of proposition 8.1 and corollary 8.2 of \cite{MR3298472}, we know \(\norm{Y ,Y'}_{X, 2\gamma} \) is in \(L^p(\Omega) \) for any \(p\geq1 \). So we have
\begin{proposition}
	\label{integrability}
	Under the assumptions of theorem \ref{main1}. For any $\gamma\in(0,\frac{1}{2})$, define
	\begin{equation*}
	\mathcal{L}_{\theta }=:1+L_\theta(X)^{-1}+\norm{M_t}_\gamma+\norm{Y ,Y'}_{X, 2\gamma}+\rho_{\gamma}(\boldsymbol{X}),
	\end{equation*}
	 then \(\mathcal{L}_{\theta }\in L^p(\Omega) \) for all \(p\geq 1 \).
\end{proposition}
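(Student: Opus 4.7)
The plan is to observe that $\mathcal{L}_{\theta}$ is a finite sum of non-negative random variables, so by the triangle inequality in $L^p(\Omega)$ it suffices to check that each summand has moments of all orders separately. The argument then becomes an aggregation of results already established in this paper and in standard references.

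For the term $L_\theta(X)^{-1}$ I would invoke Proposition \ref{Diffusion Holder}. The tail estimate established there,
\[
\mathbb{P}\bigl(L_\theta(X)^{-1}>t\bigr)=\mathbb{P}\bigl(L_\theta(X)<1/t\bigr)\le C_1\exp\bigl(-C_2 t^{2-2k}\bigr),
\]
decays faster than any polynomial in $t$; integrating $p\,t^{p-1}$ against this tail immediately yields $L_\theta(X)^{-1}\in L^p(\Omega)$ for every $p\ge 1$. This is in fact the only genuinely new ingredient required for the proposition.

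For $\rho_\gamma(\boldsymbol{X})$ I would appeal to the known Gaussian-type integrability of the canonical rough lift of a uniformly elliptic Markov process; this is classical for Markovian rough paths (see the references cited in Section 2.2, in particular \cite{MR2247926} and the developments recalled in \cite{MR2786645}), and gives Gaussian tails for $\rho_\gamma(\boldsymbol{X})$, hence all $L^p$ moments. The Hölder norm $\|M_t\|_\gamma$ (in the sense of the combined process $U_t=(X_t,Y_t,J^{X,Y}_{t\leftarrow 0},J^{X,Y}_{0\leftarrow t})$ introduced just before the statement) is then handled by Proposition \ref{Integrability}: since $(X_t,Y_t)$ together with both Jacobians satisfies an SDE driven by $W_t$ with $C^\infty_b$ coefficients, the $\gamma$-Hölder constant of every component lies in $L^p(\Omega)$ for all $p\ge 1$.

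Finally, for the controlled rough path norm $\|Y,Y'\|_{X,2\gamma}$, $Y$ is driven by $\boldsymbol{X}$ through an RDE with $C^\infty_b$ vector fields, so $Y$ is automatically controlled by $X$ with Gubinelli derivative $Y'_t=(V_1(Y_t),\dots,V_d(Y_t))$. The continuity/growth estimate for the Itô–Lyons map on the space of controlled rough paths, recorded along the lines of Proposition 8.1 and Corollary 8.2 of \cite{MR3298472}, bounds $\|Y,Y'\|_{X,2\gamma}$ by a polynomial expression in $\rho_\gamma(\boldsymbol{X})$ and $\|V_\bullet\|_{C^2_b}$, so the $L^p$ integrability transfers from $\rho_\gamma(\boldsymbol{X})$ to $\|Y,Y'\|_{X,2\gamma}$.

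I do not expect a serious obstacle: every term has already been estimated elsewhere, and the only work is bookkeeping. The one place where care is needed is to verify that the tail bound of Proposition \ref{Diffusion Holder} is indeed polynomial integrable after reciprocation — this is the step where non-constancy of $a(x)$ and the ellipticity assumption enter crucially through the small-ball estimate Lemma \ref{smallball}. Once this is pinned down, combining the four bounds via Minkowski closes the proof.
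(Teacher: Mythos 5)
Your proposal takes essentially the same route as the paper: decompose $\mathcal{L}_\theta$ termwise, invoke Proposition \ref{Diffusion Holder} for $L_\theta(X)^{-1}$, Proposition \ref{Integrability} (applied to the coupled diffusion $(X_t,Y_t)$ and its Jacobian) for the H\"older norm, and the lines of Proposition 8.1/Corollary 8.2 of \cite{MR3298472} for $\norm{Y,Y'}_{X,2\gamma}$. One caveat: your claim that the It\^o--Lyons map bounds $\norm{Y,Y'}_{X,2\gamma}$ \emph{polynomially} in $\rho_\gamma(\boldsymbol{X})$ is not accurate for general rough paths (the deterministic estimate is exponential, which is precisely why integrability of RDE solutions is nontrivial); the paper sidesteps this by observing that $(X_t,Y_t)$ solves an SDE with $C^\infty_b$ coefficients and deriving the moment bounds from Proposition \ref{Integrability} rather than from a rough-path continuity estimate, so you should lean on that structure instead.
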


\subsection{Proof of main results}

In order to prove theorem \ref{main1}, by lemma 2.31 of \cite{MR2200233} and proposition \ref{Smoothness}, it boils down to get an estimate on \[ \mathbb{P}\left(\inf_{\norm{v}= 1} v^T \Gamma(Y_t)v   <\epsilon\right) \] for every \(t\in (0,1]\). Note that, we can write
\[\Gamma(Y_t)=J^Y_{t\leftarrow 0}\cdot C(Y_t) \cdot (J^Y_{t\leftarrow 0})^T,   \]
  where \(C(Y_t) \) is the so called reduced Malliavin matrix of \(Y_t \). Since $J^Y_{t\leftarrow 0}$ already verifies proposition \ref{Smoothness}, it suffices to prove our estimate for $C(Y_t)$.  
  
  For a fixed unit vector \(v\in\mathbb{R}^d \), we define
\begin{equation*}
f_v^i(s)=v^T\cdot J^Y_{0\leftarrow s}V_i(Y_{s}).
\end{equation*}
With this and our previous computation \eqref{Malliavin derivative}, we have
\begin{align*}
 v^T C_t(Y)v  =\sum_{j=1}^{d} \int_{0}^{1}\abs{\sum_{i=1}^{d}  \int_{0}^{t}f_v^i(s)d\boldsymbol{D}^j_r\boldsymbol{X}^i_s}^2dr.
\end{align*}
\begin{proof}[Proof of theorem \ref{main1}.]
 It is easily checked that \(f_v^i(s)\) satisfy the assumptions of lemma \ref{Holder regularity}. So for any $\gamma\in (0,\frac{1}{2})$, let
 \begin{equation*}
 	F^{j,i}(r)=\sum_{i=1}^{d}  \int_{0}^{t}f_v^i(s)d\boldsymbol{D}^j_r\boldsymbol{X}^i_s,
 \end{equation*}
 we have $\mathcal{R}=\norm{F}_\gamma\in L^p(\Omega)$ for all $p\geq 1$. When $v^T C_t(Y)v<1$, by lemma A.3 of \cite{MR2814425} we have
\begin{align}
	\label{Holder L^2}
	\sup_{r\in[0,1]}\abs{\sum_{i=1}^{d}  \int_{0}^{t}v^T J^Y_{0\leftarrow s}V_i(Y_s)d\boldsymbol{D}^j_r\boldsymbol{X}^i_s}&\leq 2\mathcal{R}^{\frac{2\gamma}{2\gamma+1}} (v^T C_t(Y)v)^{\frac{1}{2\gamma+1}}+2(v^T C_t(Y)v)\nonumber \\
	&\leq (2\mathcal{R}^{\frac{2\gamma}{2\gamma+1}}+2)(v^T C_t(Y)v)^{\frac{1}{2\gamma+1}}.
\end{align}
We can deduce from \eqref{Holder L^2} that there exists some constant $C>0$ such that 
\begin{equation*}
	\sup_{r\in[0,t]} \abs{\int_{0}^{t}\Xi_s d\boldsymbol{D}_r\boldsymbol{X}_s}\leq C (2\mathcal{R}^{\frac{2\gamma}{2\gamma+1}}+2)(v^T C_t(Y)v)^{\frac{1}{2\gamma+1}},
\end{equation*}
where $\Xi_s\in\mathbb{R}^{d\times d}$ with $(\Xi_s)_{ji}=f_v^i(s)$. Now by proposition \ref{last step} we are able to find \(\alpha>0 \) such that
\begin{equation}
	\label{step 1}
	\left\{v^T C_t(Y)v<\epsilon  \right\}\Rightarrow_\epsilon \left\{ \sup_{s\in[0,t]}\abs{f_v^i(s)}\leq( 2\mathcal{R}^{\frac{2\gamma}{2\gamma+1}}+2)^\alpha \epsilon^{\frac{\alpha}{2\gamma+1}} \right\}. 
\end{equation} 
The key observation is that
\begin{equation*}
	f_v^i(t)=v^TJ^Y_{0\leftarrow t}V_i(Y_t)=v^TV_i(y_0)+\int_{0}^{t}[V_i, V_0](Y_s)ds+\sum_{j=1}^{d}\int_{0}^{t}[V_j,V_i](Y_s)d\boldsymbol{X}^i_s.
\end{equation*}
By proposition \ref{Diffusion Holder} and \ref{Norris}, there exist some \(q,l>0 \) 
\begin{equation*}
	\norm{[V_i, V_0]}_{\infty}\; \&\; \; \norm{[V_j,V_i]}_{\infty}\leq M\mathcal{L}_{\theta }^q\norm{f_v}_{\infty}^l.
\end{equation*}
By induction, we can see
\begin{equation*}
	\norm{v^T J^Y_{0\leftarrow s}W(Y_t)}_{\infty}\leq C \mathcal{L}_{\theta }^{m(k)}\norm{f_v}_{\infty}^{n(k)}
\end{equation*}
for all \(W\in \mathcal{W}_k \), where \(m(k), n(k) \) are constants only depend on \(k\).
Since \(\{V_i \}_{0\leq i\leq d}  \) satisfy the parabolic H\"{o}rmander's condition, we can find \(a_0>0 \) such that
\begin{align}
	\label{step 2}
	a_0= \inf_{\norm{v}= 1} \sum_{W\in \cup_{0\leq k\leq k_0} \mathcal{W}_k} \abs{v^T W(y_0)}& \leq\inf_{\norm{v}= 1}\sum_{W\in  \mathcal{W}_k} \norm{v^T J^Y_{0\leftarrow s}W(Y_s)}_{\infty}\nonumber \\
	&\leq\inf_{\norm{v}= 1}\norm{v^T J^Y_{0\leftarrow s}W(Y_t)}_{\infty}\nonumber \\
	&\leq \inf_{\norm{v}= 1} C \mathcal{L}_{\theta }^{m(k)}\norm{f_v}_{\infty}^{n(k)}.
\end{align}
From \eqref{step 1} and \eqref{step 2} we have
\begin{equation}
	\label{Step 3.1}
	\left\{\inf_{\norm{v}= 1}v^T C_t(Y)v<\epsilon  \right\}\Rightarrow_\epsilon \left\{ a_0\leq C \mathcal{L}_{\theta }^{m(k)}( 2\mathcal{R}^{\frac{2\gamma}{2\gamma+1}}+2)^{\alpha n(k)} \epsilon^{\frac{\alpha n(k)}{2\gamma+1}}  \right\}.
\end{equation}
On the other hand, since $\mathcal{L}_{\theta }, \mathcal{R}\in L^p(\Omega)$ for any $p\geq 1$, we have
\begin{equation}
	\label{Step 3.2}
	\mathbb{P}\left\{ a_0\leq C \mathcal{L}_{\theta }^{m(k)}( 2\mathcal{R}^{\frac{2\gamma}{2\gamma+1}}+2)^{\alpha n(k)} \epsilon^{\frac{\alpha n(k)}{2\gamma+1}}   \right\} \leq C_p\epsilon^p.
\end{equation}
Hence, combining \eqref{Step 3.1} \eqref{Step 3.2}, we see that when $\epsilon$ is sufficiently small
\begin{equation}
	\label{Main proof 2}
	\mathbb{P}\left(\inf_{\norm{v}=1} v^T C_t(Y)v\leq \epsilon  \right)\leq C_p \epsilon^p.
\end{equation}
This finishes the proof of existence of a smooth density.

Let $p_{Y_t}(y)$ be the density of $Y_t$, we have the following upper bound (see proposition 2.1.4 and 2.1.5 in \cite{MR2200233})
\begin{equation*}
	p_{Y_t}(y)\leq C\cdot \mathbb{P}\left(\sup_{s\in[0,t]} \abs{Y_s-y_0}>\abs{y-y_0}\right)^{\frac{1}{2}} \norm{\det(\Gamma(Y_t))^{-1} }^m_{L^k(\Omega)} \norm{DY_t}^l_{h,\rho},
\end{equation*}
for some constants \(m,l,h,\rho,k\). Consider \( (X_t,Y_t) \) as solution to a stochastic differential equation driven by Brownian motion, then by proposition 2.10 from \cite{MR3531675} we have
\begin{equation*}
	\mathbb{P}\left(\sup_{s\in[0,t]} \abs{Y_s-y_0}>\abs{y-y_0}\right)^{\frac{1}{2}}\leq\exp(-\frac{C (y-y_0)^2}{t}).
\end{equation*}
We have just proved that \( \det(\Gamma(Y_t))^{-1} \in L^p(\Omega) \) for all \(p\geq 1\). Finally, for \(\norm{DY_t}^l_{h,\rho} \) the exact argument of lemma 4.1 in \cite{MR3531675} applies to \( (X_t,Y_t) \). Therefore \(\norm{DY_t}^l_{h,\rho}<C(t) \), for some positive constant \(C(t) \), and the proof is finished.  	
\end{proof}


\appendix
\setcounter{secnumdepth}{0}

{\bf Acknowledgement:} The author is grateful to Fabrice Baudoin for many insightful discussions.

{\bf Declarations:} Partial financial support was received from National Science Foundation grant DMS-1901315.

\bibliographystyle{abbrv}
\bibliography{ref}

Guang Yang: \texttt{yang2220@purdue.edu}
\\
Department of Mathematics,
Purdue University,
West Lafayette, IN 47907

\end{document}